\newcommand{\subfigure}{\subfloat}
\newtheorem{conjecture}{Conjecture}
\newtheorem{lemma}{Lemma}
\newtheorem{theorem}{Theorem}
\newtheorem{corollary}{Corollary}
\newtheorem{varthm}{Theorem}
\newcommand{\lref}[1]{Lemma \ref{lem:#1}}
\newcommand{\cref}[1]{Corollary \ref{cor:#1}}
\newcommand{\conref}[1]{Conjecture \ref{conj:#1}}
\newcommand{\eref}[1]{Equation (\ref{eq:#1})}
\newcommand{\sref}[1]{Section \ref{sec:#1}}
\newcommand{\tref}[1]{Theorem \ref{thm:#1}}
\newcommand{\tabref}[1]{Table \ref{tab:#1}}
\newcommand{\fref}[1]{Figure \ref{fig:#1}} 
\newcommand{\T}{$\mathsf T$}
\date{\dateline{March 4, 2011}{XX}\\\small Mathematics Subject Classification: 05B45,05A19}
\title{Auspicious tatami mat arrangements}
\author{Alejandro Erickson\\[-0.8ex]
\small \texttt{ate@uvic.ca}\\[-0.8ex]
\small Department of Computer Science\\
\and
Frank Ruskey\\[-0.8ex]
\small \texttt{ruskey@uvic.ca}\\[-0.8ex]
\small Department of Computer Science\\
\and
Jennifer Woodcock\\[-0.8ex]
\small \texttt{jwoodcoc@uvic.ca}\\[-0.8ex]
\small Department of Computer Science\\
\small University of Victoria \\[-0.8ex]
\small PO BOX 3055, STN CSC, Victoria BC, V8W 3P6, Canada\\
\and
Mark Schurch\\[-0.8ex]
\small \texttt{mschurch@uvic.ca}\\[-0.8ex]
\small Department of Mathematics and Statistics\\
\small University of Victoria\\[-0.8ex]
\small PO BOX 3060, STN CSC, Victoria BC, V8W 3R4, Canada\\
}
\begin{document}

\maketitle

\begin{abstract}
  An \emph{auspicious tatami mat arrangement} is a tiling of a rectilinear
    region with two types of tiles, $1 \times 2$ tiles (dimers) and $1 \times 1$ tiles (monomers).
  The tiles must cover the region and satisfy the constraint that no
    four corners of the tiles meet; such tilings are called \emph{tatami tilings}.
  The main focus of this paper is when the rectilinear region is a rectangle.
  We provide a structural characterization of rectangular tatami tilings and use it
    to prove that the tiling is completely determined by the tiles that are on
    its border.
  We prove that the number of tatami tilings of an $n \times n$ square with $n$ monomers is
    $n2^{n-1}$.
  We also show that, for fixed-height, the generating
    function for the number of tatami tilings of a rectangle is a
    rational function, and outline an algorithm that produces the generating function.
\end{abstract}

\textbf{Keywords:} {tatami, monomer-dimer tiling, rational generating function}

\section{What is a tatami tiling?}

Traditionally, a tatami mat is made from a rice straw core, with a
covering of woven soft rush straw.  Originally intended for nobility
in Japan, they are now available in mass-market stores.  The typical
tatami mat occurs in a $1 \times 2$ aspect ratio and various
configurations of them are used to cover floors in houses and temples.
By parity considerations it may be necessary to introduce mats with a
$1 \times 1$ aspect ratio in order to cover the floor of a room.  Such
a covering is said to be ``auspicious'' if no four corners of mats
meet at a point.  Hereafter, we only consider auspicious
arrangements, since without this constraint the problem is the
classical and well-studied dimer tiling problem (\cite{KenyonAkounkov},
\cite{Stanley}).  Following Knuth (\cite{Knuth}), we will call the
auspicious tatami arrangements, \emph{tatami tilings}.  The
fixed-height enumeration of tatami tilings that use only dimers (no
monomers) was considered in \cite{RuskWood}, and results for the single
monomer case were given in \cite{Alhazov}.

\begin{figure}
  \begin{center}
    \subfigure[]{\includegraphics{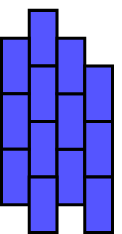}}
    \hspace{0.5in}
    \subfigure[]{\includegraphics{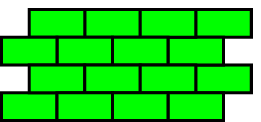}}
    \hspace{0.5in}
    \subfigure[]{\includegraphics{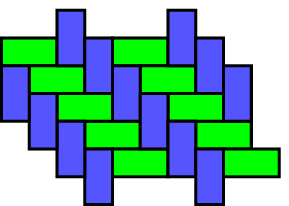}}    

\caption{(a) Vertical bond pattern. (b) Horizontal bond pattern. (c) Herringbone pattern.}
    \label{fig:patterns}
  \end{center}
\end{figure}

Perhaps the most commonly occurring instance of tatami tilings is in
paving stone layouts of driveways and sidewalks, where the most
frequently used paver has a rectangular shape with a $1 \times 2$
aspect ratio.  Two of the most common patterns, the ``herringbone''
and the ``running bond,'' shown in \fref{patterns}, have the tatami
property.  Consider a driveway of the shape in \fref{driveway}.  How
can it be tatami tiled with the least possible number of monomers?
The answer to this question could be interesting both because of
aesthetic appeal, and because it could save work, since to make a
monomer a worker typically cuts a $1 \times 2$ paver in half.

\begin{figure}
  \begin{center}
\includegraphics[width = 0.75\textwidth]{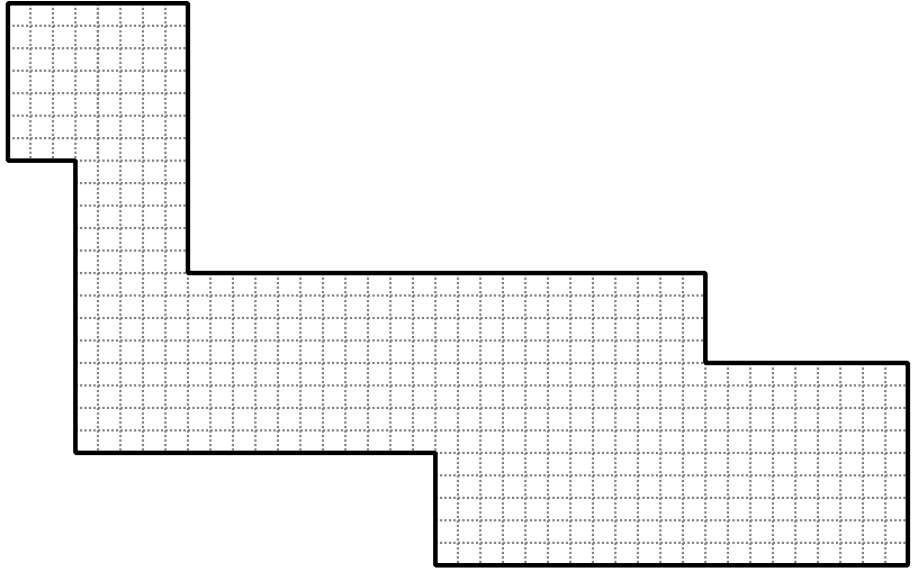}
\caption{What is the least number of monomers among all tatami tilings
  of this region?  The answer is provided at the end of the paper in
  \fref{solution}.}
\label{fig:driveway}
  \end{center}
\end{figure}

Before attempting to study tatami tilings in general orthogonal
regions it is crucial to understand them in rectangles, and our
results are primarily about tatami tilings of rectangles. 

\subsection{Outline}
In Section 2 we determine the structure of tatami tilings in a
rectangle.  Our structural characterization has important algorithmic
implications, for example, it reduces the size of the description of a
tiling from $\Theta(rc)$ to $O(\max\{r,c\})$ and may be used to
generate tilings quickly.  The three theorems in \sref{results} are
the main results of the paper and are also stated here.  The first of
these concerns the maximum possible number of monomers.  Let
$T(r,c,m)$ be the number of tilings of the $r\times c$ grid, with $m$
monomers (and the other tiles being horizontal or vertical dimers).

\begin{varthm}
  If $T(r,c,m)>0$, then $m$ has the same parity as $rc$ and
  $m\le\max(r+1,c+1)$.
\end{varthm}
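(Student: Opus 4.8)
The plan is to treat the two assertions separately. The parity claim is immediate: the $m$ monomers cover $m$ cells and the dimers cover an even number of cells, so $rc-m$ is even, i.e. $m\equiv rc\pmod 2$. Everything else concerns the bound. Since the family of tatami tilings and the quantity $\max(r+1,c+1)$ are invariant under transposing the rectangle, I would assume $r\le c$, so that it suffices to prove $m\le c+1$.

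The engine is the structural characterization of Section~2, which I would apply through two local consequences. First, \emph{every monomer in the strict interior of the rectangle sits at the centre of a pinwheel}: applying the no-four-corners rule at each of the monomer's four corners forces, at each corner, a dimer "capping" that corner on the side away from the monomer, and going around the monomer these caps must alternate horizontal/vertical, giving one of the two chiral pinwheel patterns of four cells (two dimers) around it. Second, \emph{a pinwheel propagates}: the configuration forced at the outer corner of a pinwheel dimer is again a pinwheel-type junction one cell further along the diagonal, so the pattern extends as diagonal rays of dimers that run from the monomer out to the border of the rectangle (this is the Knuth-style propagation that underlies the Section~2 structure theorem).

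From here I would handle interior and boundary monomers separately. Two distinct interior monomers cannot have rays that cross transversally — a NE--SW ray of one meeting a NW--SE ray of the other is impossible — so any two interior monomers share a diagonal of the same type, and chaining this forces \emph{all} interior monomers onto a single diagonal line, of which an $r\times c$ rectangle has at most $\min(r,c)\le c$ cells. The real work, and the main obstacle, is the boundary monomers and the combination of the two counts. Using the structure theorem one shows that a border monomer not at a corner must abut a dimer of the corresponding bond along its edge, that no edge carries three consecutive monomers, and — crucially — that a large population of border monomers is incompatible with interior monomers and with monomers on other parts of the border; packaging this into a count over the single "feature" of the border pattern to which the monomers are confined yields $m\le c+1$. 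I would emphasize that naive per-row, per-column, or per-diagonal counting is too weak here: for $r=2$ one already has tilings with $m=c+1=r+c-1$ whose monomers occupy \emph{every} diagonal (e.g. the $2\times 3$ tiling with a single central vertical dimer and four monomers), so it is the finer grip of the Section~2 structure theorem, rather than any simple counting, that makes the bound come out to exactly $\max(r+1,c+1)$, and verifying that final count is where the difficulty lies. The tight small cases, such as that $2\times 3$ example and the $3\times 3$ tilings with three monomers, serve both to test the statement and to guide the bookkeeping.
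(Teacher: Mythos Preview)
Your parity argument is fine, and your observation that every interior monomer is forced to be the centre of a vortex is correct. After that, however, the proposal does not contain a proof of the bound; it contains an outline with the hard step left blank.

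The claim that all interior monomers lie on a single diagonal is not established and is in fact false as stated: in a long low rectangle one can place two vortices far apart horizontally so that each one's four rays reach the top and bottom boundaries before they could interact with the other's, and their centres are then in the same row but on no common diagonal. Even where the claim holds, bounding interior monomers by $\min(r,c)$ is not what is needed, since interior and boundary monomers must be counted \emph{together}, and you give no mechanism for that combination.

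More seriously, the boundary count is never carried out. You write that ``packaging this into a count \ldots\ yields $m\le c+1$'' and then that ``verifying that final count is where the difficulty lies,'' which is an acknowledgement that the argument is missing. The observation that no three consecutive boundary cells are monomers is far too weak on its own: monomers alternating with dimers along both long edges already gives about $c$ monomers, and nothing in your outline explains why monomers on the short sides and in vortices cannot push the total past $c+1$.

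The paper supplies exactly the mechanism you lack. It first shows, via a \emph{diagonal flip}, that any monomer on a short side can be moved to the top or bottom without disturbing the other monomers (the nontrivial case, a monomer on a jagged segment blocked in both directions, is ruled out by an inequality forcing $c<r$). This reduces the problem to monomers on the two long edges together with those inside vortices. It then encodes the top and bottom rows as binary strings $P,Q$, argues that a vee ($11$) on one edge forces a monomer-free pair ($00$) on the opposite edge and trades one for the other, and that each of the $v$ vortices forces a monomer-free triple $000$ on \emph{both} edges, from which a $00$ is deleted. After these trades the two strings avoid $11$ and have combined length $2c-4v$, hence carry at most $c-2v+1$ ones; restoring the $v$ vortex monomers gives $m\le c-v+1\le c+1$. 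This trading argument is the substance of the proof, and your proposal contains neither it nor a substitute.
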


Following this we prove a counting result for
maximum-monomer tilings of square grids.

\begin{varthm}
  The number of $n\times n$ tilings with $n$ monomers, $n2^{n-1}$.
\end{varthm}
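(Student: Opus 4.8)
The plan is to reduce the count to the maximum-monomer case and then set up an explicit bijection with a set of size $n2^{n-1}$. First I would invoke the preceding theorem with $r=c=n$: it gives that any tiling of the $n\times n$ square has a number of monomers $m$ with $m\equiv n^2\equiv n\pmod 2$ and $m\le n+1$. Since $n+1$ has the opposite parity to $n$, the largest attainable value is $m=n$; hence the tilings enumerated in the statement are precisely those with the maximum possible number of monomers (in particular this number is attained, since $n2^{n-1}>0$), and everything below concerns only those.

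Next I would use the structural characterization of Section 2, and in particular the fact that a rectangular tatami tiling is determined by the tiles on its border. The key step is to read off from that characterization the rigid shape forced on a maximum-monomer tiling: I expect the monomers to be squeezed into a very constrained configuration assembled from the bond and herringbone patterns of \fref{patterns}, with essentially one ``free'' binary decision attached to each of $n-1$ consecutive rows (or columns) and one $n$-fold choice fixing where the extremal border monomer (equivalently, the governing fault line) sits. Concretely, I would try to encode each maximum-monomer tiling by a pair $(k,\varepsilon)\in\{1,\dots,n\}\times\{0,1\}^{n-1}$, where $k$ records the position of the distinguished border monomer and $\varepsilon$ records, row by row, the local choice that propagates the structure; the ``determined by the border'' result is what makes such an encoding well defined and lossless. (Part of the bookkeeping here is deciding how the factor $n2^{n-1}$ should really be split --- e.g.\ whether to peel off a factor $2$ for the horizontal/vertical dichotomy and then have only $n-2$ binary choices --- which the case analysis must settle.)

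I would then verify the map is a bijection. For surjectivity onto genuine tilings I would run the construction associated to an arbitrary $(k,\varepsilon)$ and check, using the local $2\times2$-block form of the tatami constraint, that it always produces a tatami tiling with exactly $n$ monomers. For the reverse direction I would argue that every maximum-monomer tiling arises this way: the structural characterization leaves no room for a configuration outside the encoded family, and two tilings with the same $(k,\varepsilon)$ have the same border and hence, again by the Section 2 result, are equal. Counting the pairs then yields $n\cdot 2^{n-1}$.

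The hard part will be the middle step: extracting from the general structure theorem the exact statement that a maximum-monomer square tiling has precisely one binary degree of freedom per row beyond a single $n$-way choice --- neither more nor fewer --- and organizing the (likely somewhat intricate) case analysis, with the parity of $n$ treated separately since an $n\times n$ column cannot carry a lone monomer when $n$ is even. As a sanity check the formula should reproduce the small cases: $n=1$ gives the single tiling consisting of one monomer, and $n=2$ gives $4$, namely the unique dimer placed against each of the four sides.
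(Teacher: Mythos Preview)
Your proposal is really a plan rather than a proof: the central object --- the bijection with $\{1,\dots,n\}\times\{0,1\}^{n-1}$ --- is never constructed, and you yourself flag that you are unsure how the factor $n2^{n-1}$ should be split. That is exactly the content of the theorem, so leaving it to a later ``case analysis'' is a genuine gap. In particular, the guess that there is ``one $n$-fold choice for a distinguished border monomer and $n-1$ independent binary choices, one per row'' does not match the actual structure: the monomers do not sit one per row, the binary choices (once correctly identified) are not independent, and the factor $n$ does not arise from a single positional parameter. The ``determined by its border'' lemma tells you a tiling is recoverable from its boundary, but it does not by itself tell you which boundary configurations occur or how many there are.

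The paper's argument is organized quite differently. First it proves a structural lemma specific to the square maximum-monomer case: every such tiling has exactly two corner monomers, in \emph{adjacent} corners, and can be reached from a trivial running-bond tiling by a sequence of \emph{diagonal flips} moving each monomer at most once. The adjacent-corner fact gives a clean factor of $4$ by rotation, reducing to counting canonical tilings $S(n)=T(n,n,n)/4$. Then, instead of a global bijection, the paper sets up the recurrence $S(n)=2^{n-2}+4S(n-2)$ combinatorially: single out the two extreme non-corner monomers $w,e$; flipping either ``inward'' rigidifies the rest and contributes $2^{n-2}$; otherwise $w,e$ decouple (contributing a factor $4$) and the remaining $n-4$ monomers are shown to be in flip-compatibility-preserving bijection with those of an $(n-2)\times(n-2)$ canonical tiling. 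The recurrence then solves to $S(n)=n2^{n-3}$. The even and odd cases are handled in parallel but with different trivial base tilings. If you want to salvage your approach, the missing ingredients are precisely the adjacent-corner lemma and the diagonal-flip description of all tilings from the trivial one; without those you have no handle on which border patterns are realizable.
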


Our final result concerns fixed-height tilings with an unrestricted number of monomers.

\begin{varthm}
  For a fixed number of rows $r$, the ordinary generating function of
  the number of tilings of an $r \times n$ rectangle is a rational
  function.
\end{varthm}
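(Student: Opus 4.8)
The plan is to express the column-by-column construction of an $r \times n$ tatami tiling as a walk in a finite directed graph, equivalently as a power of a single transfer matrix, from which rationality of the generating function follows by elementary linear algebra. Scanning the rectangle from left to right, I would attach to the seam between column $j$ and column $j+1$ a \emph{state} that records (i) which of the $r$ cells of column $j$ are left halves of horizontal dimers protruding into column $j+1$, and (ii) enough of the tiling of column $j$ itself — the decomposition of its non-protruding cells into vertical dimers and monomers — to allow the four-corner condition along that seam to be checked. This is exactly the right amount of data because the tatami constraint is local: four tile corners meeting at a point is determined by the (at most four) cells incident to that point, which for a point on a vertical grid line lie in two horizontally adjacent columns. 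Thus whether a proposed filling of column $j+1$ is legal — it must use monomers, vertical dimers, the right halves of the dimers prescribed by the current state, and new right-protruding left halves recorded in the next state, and must create no forbidden corner on the seam just crossed — and which new state it produces, depend only on the current state. The number of states is bounded by a function of $r$ alone (at most $2^r$ choices of protruding cells times a bounded number of single-column tilings of height $r$), so the graph is finite.

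Next I would assemble the transfer matrix $M$ indexed by states, with $M_{s,s'}$ equal to the number of legal fillings of one fresh column that carry state $s$ to state $s'$. Let $\mathbf{u}$ be the indicator row vector of the initial state (nothing protruding into column $1$) and $\mathbf{v}$ the indicator column vector of states with nothing protruding to the right of the last column. Then the number of tatami tilings of the $r \times n$ rectangle is $T_r(n) = \mathbf{u}\, M^{n}\, \mathbf{v}$, and hence
\[
  \sum_{n \ge 0} T_r(n)\, x^{n} \;=\; \mathbf{u}\,\Bigl(\sum_{n\ge 0} x^{n} M^{n}\Bigr)\,\mathbf{v} \;=\; \mathbf{u}\,(I - xM)^{-1}\,\mathbf{v}.
\]
By Cramer's rule the entries of $(I-xM)^{-1}$ are ratios of polynomials in $x$ (cofactors over $\det(I-xM)$), so the generating function is a rational function of $x$, with denominator dividing $\det(I - xM)$. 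Equivalently, one may phrase this as: the set of column-sequences yielding valid tatami tilings is a regular language recognized by the automaton above, and the length generating function of a regular language is rational. The same construction, made explicit, is the promised algorithm: build the reachable portion of the state graph for the given $r$, form $M$, and compute $\mathbf{u}(I-xM)^{-1}\mathbf{v}$.

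The main obstacle is getting the state definition exactly right, so that the four-corner condition is genuinely a function of the state plus the new column — in particular, correctly handling lattice points on the top and bottom borders and at the left and right edges of the rectangle, where fewer than four cells are present and the condition is vacuous, and making sure that no nonlocal information is secretly required. Here the structural characterization of Section 2, while not logically needed for this theorem, is reassuring: it confirms that the interface data determining a tiling is bounded, and in fact that the reachable state space is far smaller than the crude $2^{\Theta(r)}$ bound, which is what makes the extraction algorithm practical. A secondary, routine point is verifying that monomers, vertical dimers, and horizontal dimers interact consistently across the seam (row alignment of the two halves of each horizontal dimer), which is a finite case check built into the definition of $M$.
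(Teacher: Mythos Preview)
Your proposal is correct and is essentially the same approach as the paper's: both build a finite column-by-column automaton whose states encode how the current column is tiled (the paper's ternary strings $s_v \in \mathcal{S}_r$ are exactly your data of protruding horizontal dimers plus the vertical-dimer/monomer decomposition of the remaining cells), derive the transitions by extending one column and checking the local tatami constraint, and conclude rationality from the resulting linear recurrences~/ transfer matrix. The only cosmetic difference is that the paper phrases the output as a system of recurrences and solves it, whereas you write $\mathbf{u}(I-xM)^{-1}\mathbf{v}$ directly.
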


We also provide an algorithm which outputs this generating function
for a given $r$ and explicitly give the generating function for $r=1,
2$ and $3$, along with the coefficients of the denominator for $1 \le
r \le 11$.  In Section 4 we return to the question of tatami tiling
general orthogonal regions and introduce the ``magnetic water strider
problem'' along with additional conjectures and open problems.

\section{The structure of tatami tilings: \T-diagrams}

We show that all tatami tilings have an underlying structure which
partitions the grid into regions, where each region is filled with
either the vertical or horizontal running bond pattern (or is a
monomer not touching the boundary).  For example, in
\fref{comprehensiveTiling} there are 11 regions, including the
interior monomer.  We will describe this structure precisely and prove
some results for tilings of rectangular grids.

\begin{figure}[ht]
\centering
 \includegraphics{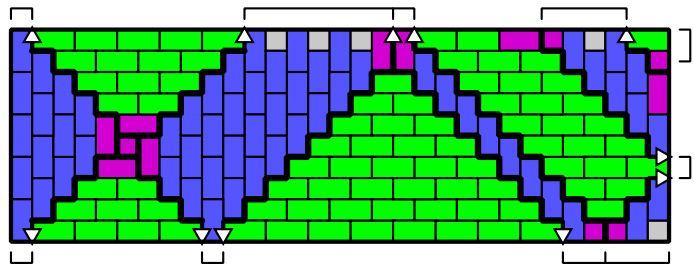}
 \caption{A tiling showing all four types of sources.  Coloured in
   magenta, from left to right they are, a clockwise vortex, a vertical
   bidimer, a loner, a vee, and another loner.  Jagged edges are
   indicated by brackets.}
   \label{fig:comprehensiveTiling}
 \end{figure}

\begin{figure}[ht]
  \centering
  \subfigure[A \emph{loner} source.]{\includegraphics{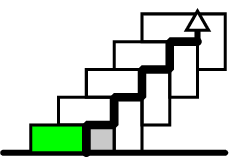}
  \label{fig:lonersource}}
  \hspace{0.5in}
  \subfigure[A \emph{vee} source.]{\includegraphics{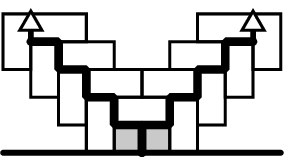}
  \label{fig:veesource}}
\caption{These two types of sources must have their coloured tiles on
  a boundary, as shown, up to rotational symmetry.}
  \label{fig:lonerveesources}
\end{figure}

\begin{figure}[ht]
  \centering
  \includegraphics{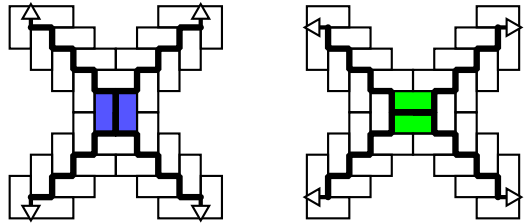}
  \caption{A vertical and a horizontal \emph{bidimer} source.  A
    bidimer may appear anywhere in a tiling provided that the coloured
  tiles are within the boundaries of the grid.}
  \label{fig:bidimersource}
\end{figure}

\begin{figure}[ht]
  \centering
  \includegraphics{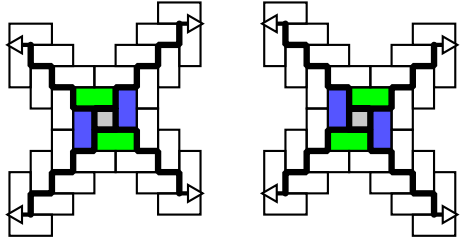}
  \caption{A counter clockwise and a clockwise \emph{vortex} source.   A
    vortex may appear anywhere in a tiling provided that the coloured
  tiles are within the boundaries of the grid.}
  \label{fig:vortexsource}
\end{figure}

Wherever a horizontal and vertical dimer share an
edge \includegraphics[scale=0.5]{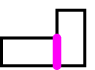}, either the placement of
another dimer is forced to preserve the tatami condition, or the tiles
make a \T~with the boundary of the
grid \includegraphics[scale=0.5]{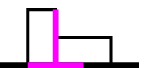}.  In the former case,
the placement of the new dimer again causes the sharing of an
edge \includegraphics[scale=0.5]{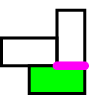}, and so
on \includegraphics[scale=0.5]{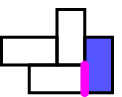}, until the boundary
is reached.

The successive placement of dimers, described above gives rise to
  skinny herringbone formations, which we call \emph{rays}.  They
  propagate from their source to the boundary of the grid and cannot
  intersect one another.  Between the rays, there are only vertical or
  horizontal running bond patterns.  The intersection of a running
  bond with the boundary is called a \emph{segment}.  This segment is
  said to be \emph{jagged} if it consists of alternating monomers and
  dimers orthogonal to the boundary; otherwise it is said to
  be \emph{smooth} because it consists of dimers that are aligned with
  the boundary.  Every jagged segment is marked with square brackets
  in \fref{comprehensiveTiling}.

We know that a ray, once it starts, propagates to the boundary.  But
how do they start?  In a rectangular grid, we will show that a ray
starts at one of four possible types of
\emph{sources}.  In our discussion we use inline diagrams to
depict the tiles that can cover the grid squares at the start of a
ray.
We need not consider the case where the innermost square (denoted by the circle)
\includegraphics{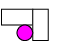} is covered by a vertical
dimer \includegraphics{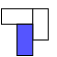} because this would move the
start of the ray.

If it is covered by a horizontal dimer
\includegraphics{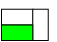}, the source, which consists of the
two dimers that share a long edge, is called a \emph{bimer}.
Otherwise it is covered by a monomer \includegraphics{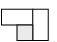}
in which case we consider the grid square beside
it \includegraphics{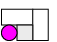}.  If it is covered by a monomer
the source is called a \emph{vee}
\includegraphics{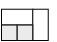}; if it is covered by a vertical dimer
the source is called a \emph{vortex} \includegraphics{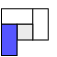};
if it is covered by a horizontal dimer it is called a \emph{loner}
\includegraphics{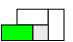}.  Each of these four types of sources forces at least
one ray in the tiling and all rays begin at either a bidimer, vee,
vortex or loner.  The different types of features are depicted in
Figures \ref{fig:lonerveesources}-\ref{fig:vortexsource}.

The coloured tiles in Figures
\ref{fig:lonerveesources}-\ref{fig:vortexsource} characterize the four
types of sources.  A bidimer or vortex may appear anywhere in a
tiling, as long as the coloured tiles are within its boundaries.  The
vees and loners, on the other hand, must appear along a boundary, as
shown in \fref{lonerveesources}.


The collection of bold staircase-shaped curves in each of the four types of
source-ray drawings in
Figures \ref{fig:lonerveesources}-\ref{fig:vortexsource}, is called a
\emph{feature}.  These features do not intersect when drawn on a
tatami tiling because rays cannot intersect. A \emph{feature-diagram}
refers to a set of non-intersecting features drawn in a grid.  Not
every feature-diagram admits a tatami tiling; those that do are called
\emph{\T-diagrams}.  See \fref{featureTdiagram}.

\begin{figure}[ht]
 \centering
\includegraphics[width=5.0in]{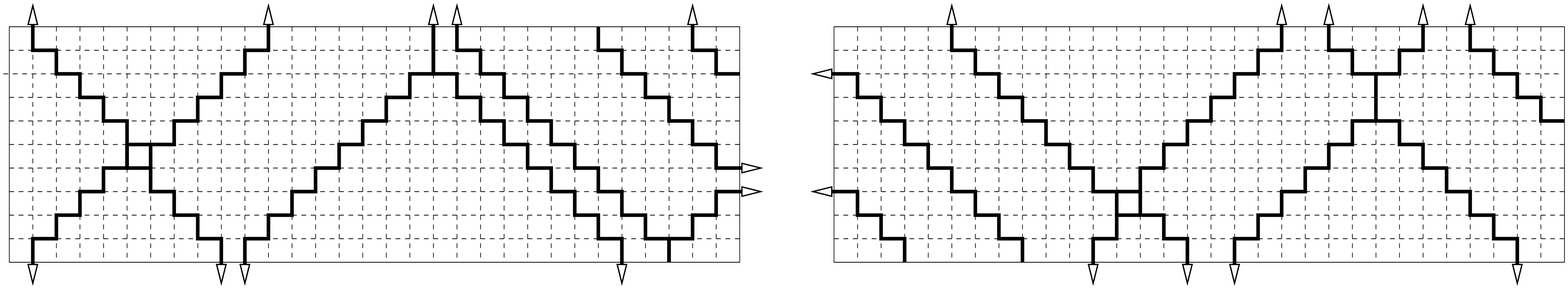}

\hspace{0.0in} (a) \hspace{2.0in} (b)

\caption{(a) The \T-diagram of \fref{comprehensiveTiling}.
  (b) A feature diagram that is not a \T-diagram.}
\label{fig:featureTdiagram}
\end{figure}

Recall that a tatami tiling consists of regions of horizontal and
vertical running bond patterns.  A feature-diagram is a \T-diagram if
and only if each pair of rays bounding the same region admit bond
patterns of the same orientation and the distance between them has the
correct parity.  The precise conditions are stated in \lref{conditions}.



Features decompose into four types of rays, to which we assign the
symbols $NW$, $NE$, $SW$, and $SE$, indicating the direction of
propagation.  Two rays are said to be \emph{adjacent} if they can be
connected by a horizontal or vertical line segment which intersects no
other ray.  If $(\alpha, \beta)$ is an adjacent pair, then $\alpha$ is
on the left when considering horizontally adjacent pairs and on the
bottom when considering vertically adjacent pairs.


\begin{lemma}

   A feature diagram is a \T-diagram if and only if the following four
   conditions hold.

   Horizontal Conditions:
   \begin{enumerate}
   \item[(H1)] There are no horizontal $(\alpha E,\beta
     E)$-adjacencies, nor are there horizontal $(\alpha W, \beta
     W)$-adjacencies, where $\alpha$ and $\beta$ are either $N$ or
     $S$ (\fref{badPairs}); 
   \item[(H2)] all distances are even, except for horizontal
     $(NE,NW)$-distances and horizontal $(SE,SW)$-distances, which are
     odd
     (\fref{parities}). 
   \end{enumerate}
   
   Vertical Conditions:
   \begin{enumerate}
   \item[(V1)] There are no vertical $(S\alpha ,S\beta )$-adjacencies,
     nor are there any vertical $(N\alpha , N\beta )$-adjacencies,
     where $\alpha$ and $\beta$ are either $E$ or $W$;
   \item[(V2)] all distances are even, except for vertical\\
     $(NW,SW)$-distances and vertical $(NE,SE)$-distances, which are
     odd.
   \end{enumerate}
 \label{lem:conditions}
 \end{lemma}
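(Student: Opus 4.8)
The plan is to split the biconditional along the two features of a \T-diagram that were isolated in the discussion above: a feature diagram admits a tatami tiling exactly when (i) every region between rays can be assigned a bond orientation that all of its bounding rays agree on, and (ii) once these orientations are fixed, each region carries a running bond of its orientation that meshes with its bounding rays. I would prove that (i) is equivalent to conditions (H1) and (V1), and that, given the orientations, (ii) is equivalent to (H2) and (V2); together with the structural fact just quoted, this gives the lemma. The engine for both halves is a single table, compiled once from the herringbone-to-bond transition pictures used to define rays, recording for each ray type ($NW$, $NE$, $SW$, $SE$) which of the two running bonds is forced on each of its two sides.

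For the forward direction, suppose the feature diagram is a \T-diagram and inherit the orientation of every region from an actual tiling. A horizontal $(\alpha E,\beta E)$- or $(\alpha W,\beta W)$-adjacency would, by the table, demand two incompatible orientations on the single region trapped between the two rays, which is impossible; this is (H1), and (V1) follows by exchanging rows and columns. For (H2), examine a region pinched between a horizontally adjacent pair of rays: it holds a running bond of the orientation the table prescribes, and the two bounding staircases bite into it with an offset controlled by the horizontal distance between them. Lining the staircase teeth up with the dimer grid of the bond so that no four tile corners coincide along either interface forces the distance to be even, the only exceptions being the $(NE,NW)$ and $(SE,SW)$ configurations — precisely the cases in which the two staircases lean toward one another — where the count flips to odd. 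That is (H2), and (V2) is the same computation rotated by ninety degrees.

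For the converse, assume (H1), (H2), (V1), (V2). Using (H1) and (V1), I would propagate horizontal/vertical labels across rays as dictated by the table, beginning from the regions that touch the outer boundary; (H1) and (V1) are exactly what forbids a clash between two rays bordering a common region, and because every ray terminates on the boundary there is no room for a cyclic inconsistency, so the labelling is well defined throughout the diagram. Fill each region with a running bond in its label's orientation, with phase forced by the bounding staircases, and fill each interior-monomer cell with a monomer. Finally check the tatami condition at every lattice point: inside a region it holds because running bonds are tatami, along a ray it holds because herringbone strips are tatami, and at each ray--region interface it holds because (H2) and (V2) supply precisely the distance parity that aligns the staircase teeth with the bond. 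Assembling these checks exhibits a genuine tatami tiling, so the feature diagram is a \T-diagram.

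The step I expect to be the main obstacle is the converse, and within it two points: first, showing the orientation labelling is genuinely well defined, where one must rule out a loop of rays and regions forcing contradictory labels, using (H1)/(V1) together with the fact that rays reach the boundary; and second, the interface verification, a case analysis over the four ray directions crossed with the two bond orientations, each case requiring the pertinent clause of (H2)/(V2) to be checked against the no-four-corners rule. The forward parity count is of the same flavour but lighter, since there a tiling is already in hand and the constraint need only be read off it.
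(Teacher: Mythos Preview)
The paper does not give a formal proof of this lemma: it states the informal characterization just before the lemma (``a feature-diagram is a \T-diagram if and only if each pair of rays bounding the same region admit bond patterns of the same orientation and the distance between them has the correct parity'') and then relies on Figures~\ref{fig:badPairs} and~\ref{fig:parities} to illustrate why the orientation and parity constraints take the stated form. Your proposal is an expansion of exactly that reasoning --- orientation compatibility gives (H1)/(V1), parity of the gap gives (H2)/(V2) --- so the approach matches the paper's, with you supplying the case analysis and the well-definedness check that the paper leaves implicit.

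One remark on the converse: your concern about ``a loop of rays and regions forcing contradictory labels'' is slightly overstated. Each ray has a well-defined left/right (or above/below) side, and the bond orientation on each side is determined locally by the ray's type alone; (H1)/(V1) then say precisely that two rays bounding a common region agree on that region's orientation. There is no global propagation needed and hence no monodromy issue to rule out --- the labelling is local, not obtained by walking around cycles. With that simplification, the remaining interface parity check is exactly the content of Figure~\ref{fig:parities}, and your plan goes through.
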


 \begin{figure}[ht]
   \centering 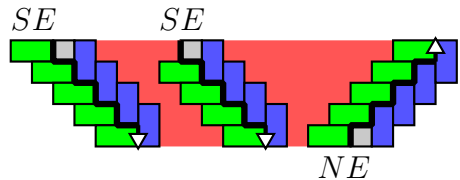
   \caption{Incompatible pairs of adjacent rays.  The
     region between the adjacent rays would have to contain both
     horizontal and vertical dimers.}
   \label{fig:badPairs}
 \end{figure}
 \begin{figure}[ht]
   \centering
   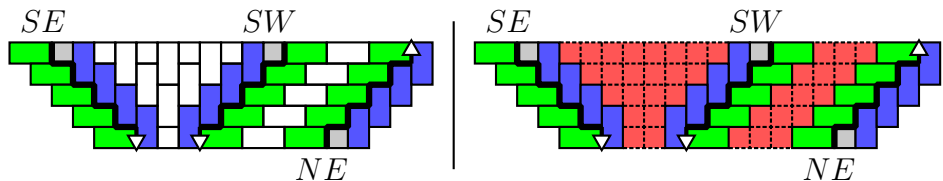
   \caption{If the size of the gap between adjacent rays has the
     correct parity then it can be properly tiled, as shown on the
     left.  On the right, the red regions cannot be tiled because the
     gaps have the wrong parities.}

   \label{fig:parities}
\end{figure}

This characterization has some implications for the space and time
complexity of a tiling.

\begin{lemma}
  \label{lem:conditionbenefits}
  Let $G$ be an $r\times c$ grid, with $r<c$.
\begin{itemize}
\item[(i)] The storage requirement for a tatami tiling of $G$ is $O(c)$; that is,
  a tatami tiling can be recovered from $O(c)$ integers, each of size at most $c$.
\item[(ii)] A tatami tiling of $G$ is uniquely determined by the tiles on its boundary.
\item[(iii)] Whether a feature diagram in $G$ is a \T-diagram can be
  determined in time $O(c)$.
\end{itemize}
\end{lemma}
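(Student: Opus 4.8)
The plan is to deduce all three parts from the structural picture already developed, in particular from \lref{conditions}. The unifying idea is that a tatami tiling is essentially encoded by its feature diagram---the locations and types of its sources---from which the rays, and then the running-bond regions between them, are reconstructed. For (i), I would first bound the number of features by $O(c)$: every source emits at least one ray, every ray runs from its source to $\partial G$ (by the discussion preceding \lref{conditions}), and distinct rays are interior-disjoint, so, since each ray meets $\partial G$ in a bounded number of cells, there are at most $O(r+c)=O(c)$ rays and hence $O(c)$ features. Each feature is named by the coordinates of its source (two integers in $\{1,\ldots,c\}$) and its type (a bidimer, vee, vortex, or loner, with a handedness/orientation bit), so the whole feature diagram is a list of $O(c)$ integers of size at most $c$; adding $O(1)$ bits to fix the orientation of any running-bond region not already pinned down by a bounding ray or by $\partial G$, one recovers the tiling by tracing out the rays and filling each region with the forced running-bond pattern. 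This gives (i).

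For (ii) I would show that the feature diagram can be read off the boundary. Where a ray meets $\partial G$ it leaves a recognizable footprint---a \T~with the boundary---distinguishable from the smooth and jagged segments of a running-bond region, and from that footprint the ray can be traced inward cell by cell because, as recalled before \lref{conditions}, each new tile along a ray is forced by the tatami condition. Tracing every ray inward, the traces either converge on a small interior configuration, necessarily a bidimer or a vortex, or issue from the boundary itself, in which case the source is a loner or a vee and is already among the boundary tiles; no other possibility occurs, since an interior source emits all of its rays to the boundary. Hence the full set of sources, and therefore the feature diagram, is determined by the boundary, and with it (by (i) and the structural characterization) the tiling; the remaining running-bond regions are also forced, since each meets $\partial G$ and its orientation is visible there.

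For (iii), given a feature diagram in $G$ I would compute the ray-adjacency relation with one horizontal and one vertical sweep over the $O(c)$ rays; because the rays together with $\partial G$ form a planar subdivision with $O(c)$ edges, there are only $O(c)$ adjacent pairs and each sweep costs $O(c)$. Then, running through the adjacent pairs, I check for each the two pertinent clauses of \lref{conditions}---the type restriction (H1)/(V1) and the parity of the inter-ray distance (H2)/(V2)---in $O(1)$ time apiece, and declare the diagram a \T-diagram iff every check passes. The step I expect to be the crux is the counting estimate underlying both (i) and (iii): arguing that the number of features, rays, and ray-adjacencies is genuinely $O(c)$ rather than merely polynomial. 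Once rays are known to terminate on the boundary and to be pairwise interior-disjoint this is a packing and planarity argument, but pinning down the constant-per-boundary-cell bound precisely is the delicate part, and it is exactly what makes the $O(c)$ storage and $O(c)$ running time possible.
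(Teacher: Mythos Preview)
Your proposal is correct and follows essentially the same approach as the paper: encode the tiling by its feature diagram (handling the trivial no-feature case separately), recover the feature diagram from the boundary by tracing rays inward to their sources, and verify a feature diagram by checking \lref{conditions} on $O(c)$ adjacent ray-pairs. The only noteworthy difference is in (iii): you bound the number of ray-adjacencies via planarity of the subdivision, whereas the paper argues that each ray bounds at most two regions, each bounded by at most three other rays, giving maximum degree six in the adjacency graph and hence $O(c)$ edges---both arguments are valid and equally short.
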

\begin{proof}
  To prove (i), notice that a non-trivial \T-diagram defines a tiling
  uniquely.  In the \emph{trivial} case there are no features and
  exactly four possible running bond configurations, two horizontal
  and two vertical.  Otherwise, each feature can be stored as a pair
  of coordinates and a type.  It is not possible to have more
  than $O(c)$ compatible features in a \T-diagram in $G$, so at most
  $O(c)$ integers of size at most $c$ are needed, proving (i).

  To prove (ii), we need to show that we can recover the \T-diagram
  from the tiles that touch the boundary.  Those portions of the
  \T-diagram corresponding to vees and loners, as well as bidimers
  whose source tiles are both on the
  boundary \includegraphics[scale=0.5]{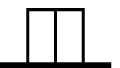}, are easy to
  recover.  The black rays in \fref{tilingFromBorder} show their
  recovery.  Imagine filling in the remaining red rays, whose ends
  look like \includegraphics[scale=0.5]{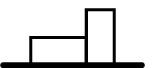}, by following them
  na\"ively, backwards from their endings to the boundary.  The ends
  of the four rays emanating from a bidimer or vortex will always form
  exactly one of the four patterns illustrated in
  \fref{veesHamsFromBorder}; in each case, it is straightforward to
  recover the position and type of source.  This proves (ii).

\begin{figure}[ht]
\centering
\includegraphics[scale=1.0]{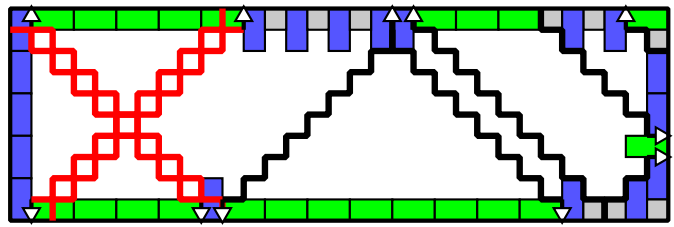}

\caption{The same tiling as in \fref{comprehensiveTiling} with only
  the boundary tiles showing.  Rays emanating from sources on the
  boundary are in black and otherwise, they are drawn na\"ively in
  red, to be matched with a candidate source from
  \fref{veesHamsFromBorder}.}
 \label{fig:tilingFromBorder}
\end{figure}

\begin{figure}[ht]
\centering
\includegraphics[scale=1.0]{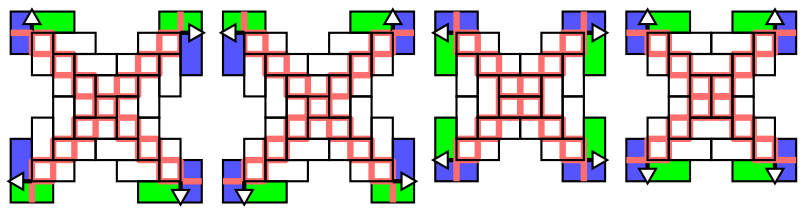}

\caption{The four types of vortices and bidimers are recoverable
  from the ends of their rays, at the boundary of the grid.  Given the
  ends of rays, they can be extended na\"ively to form one of the two
  patterns in red.  One occurs only for bidimers and the other for
  vortices.  The orientation of the source is determined by the ends of
  the rays.}
    \label{fig:veesHamsFromBorder}
  \end{figure}

  Claim (iii) is true provided that \lref{conditions} only needs to be
  applied to $O(c)$ ray-adjacencies.  Notice that a pair of rays can
  be adjacent and yet not be adjacent on the boundary.  For example,
  it happens in \fref{featureTdiagram}.

  Each ray bounds exactly two regions, each of which is bounded by at
  most three other rays, and two rays must bound the same region to be
  adjacent.  Thus, a ray is adjacent to at most six other rays.  Let
  the ray-adjacencies be the edges of a graph $G = (V,E)$ whose vertex
  set is the set of rays, so that $G$ has maximum degree at most
  $6$. Therefore, the number of ray-adjacencies, $|E|$, and hence
  applications of \lref{conditions}, is linear in the number of rays,
  $|V|$, which is at most four times the number of features, which is
  in $O(c)$.  This proves (iii).
\end{proof}

The \T-diagram structure is a useful tool for enumerating and generating
tatami tilings as will be illustrated in the following sections.

\section{Counting results}
\label{sec:results}

Let $T(r,c,m)$ be the number of tatami tilings of a rectangular grid
with $r$ rows, $c$ columns, and $m$ monomers.  Also, $T(r,c)$ will
denote the sum
\begin{align*}
  T(r,c) = \sum_{m \ge 0} T(r,c,m).
\end{align*}

We begin by giving necessary conditions for $T(r,c,m)$ to be non-zero.

\begin{theorem}\label{thm:maxmon}
  If $T(r,c,m)>0$, then $m$ has the same parity as $rc$ and
  $m\le\max(r+1,c+1)$.
\end{theorem}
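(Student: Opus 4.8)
The parity claim is the easy half: in any tiling of the $r \times c$ grid, the dimers cover an even number of cells, so the number of monomers $m$ satisfies $m \equiv rc \pmod 2$, with no use of the tatami condition. So the plan is to spend all effort on the bound $m \le \max(r+1,c+1)$, and I would assume WLOG $r \le c$ and aim to show $m \le c+1$.

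The approach I would take is to exploit the \T-diagram structure from Section 2. Recall that a tatami tiling decomposes into regions, each carrying a horizontal or vertical running bond pattern, separated by rays, and that monomers can only occur either (a) in a \emph{jagged segment}, where a running-bond region meets the boundary in an alternating monomer/dimer pattern orthogonal to that side, or (b) as an isolated interior monomer at the center of a vortex or vee source, or (c) inside a vee source on the boundary. The key observation is that monomers are extremely restricted: a jagged segment along the top or bottom edge contributes at most roughly $c/2$ monomers and one along the left or right edge at most roughly $r/2$, and — crucially — the rays and the non-intersection/compatibility conditions of \lref{conditions} prevent many jagged segments from coexisting. So the heart of the argument is a counting/packing lemma: I would first show that in a tatami tiling of an $r \times c$ grid ($r \le c$), all the monomers lie along a \emph{single} jagged segment (possibly together with a bounded number of extra monomers from vee/vortex sources), and that segment has length at most $c$, hence contributes at most $\lceil (c+1)/2 \rceil$ monomers — and then tighten the bookkeeping to land exactly on $c+1$, which is attained e.g. by a single jagged horizontal segment running the full width (the "all vertical dimers and monomers in one column-strip" type configuration).

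Concretely, the steps in order would be: (1) dispatch parity; (2) recall from the \T-diagram discussion that every monomer is either the center cell of a vortex/vee source or lies in a jagged boundary segment; (3) argue a vortex contributes exactly one monomer and forces four rays, while costing a lot of "boundary budget," so few vortices can fit; (4) show two distinct jagged segments cannot both be "long," because the rays bounding their regions, together with conditions (H1)–(V2), force the regions to sit in disjoint coordinate bands whose widths sum to at most $r$ or at most $c$; (5) combine to bound the total monomer count by $\max(r,c)+1$ with the $+1$ coming from the parity of an odd-length jagged segment or from one interior monomer that can be "grafted on" to an otherwise monomer-free configuration. For the last point I would carefully track the alternating pattern: a jagged segment of $k$ consecutive boundary cells holds at most $\lceil k/2\rceil \le \lceil c/2 \rceil$ monomers, which is far below $c+1$, so in fact the bound $m \le c+1$ must come from being able to use jagged segments on \emph{both} the top and bottom (or the full left and right), and I need to show the rays let you do at most that — two full-width horizontal jagged segments, one on top and one on bottom, in the trivial (ray-free) vertical running bond, giving $\lceil (c+1)/2\rceil + \lceil (c+1)/2 \rceil$; checking this equals $c+1$ when $c$ is even and handling the odd case is where the exact constant is pinned down.

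The main obstacle I anticipate is step (4): proving that the ray structure genuinely forbids three or more disjoint long jagged segments, and more generally controlling the interaction between interior (vortex/vee) monomers and boundary (jagged) monomers simultaneously. A clean way to handle this uniformly would be to assign to each monomer a "charge" spread over nearby boundary cells or over a horizontal/vertical line of the grid, and show via the non-intersection of rays and \lref{conditions} that the total charge on any line is at most $1$; then summing over the $r$ horizontal lines (or $c$ vertical lines) gives $m \le r$ or $m \le c$, and a separate single exceptional monomer (the parity/interior one) accounts for the additive $+1$. Getting that discharging scheme to be tight — not off by a constant factor — is the delicate part and will likely require a careful case analysis of the four source types against the boundary, essentially a refinement of the reasoning already used to prove parts (ii) and (iii) of \lref{conditionbenefits}.
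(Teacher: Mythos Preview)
Your parity argument matches the paper exactly. For the inequality, however, your plan has a real gap and diverges substantially from the paper's route.

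The gap is in handling monomers on the \emph{short} (vertical) sides. You note that a jagged segment on the left or right edge contributes up to roughly $r/2$ monomers, and you want step~(4) to rule out too many long jagged segments coexisting. But nothing in your outline prevents, say, a jagged segment on the left \emph{together with} jagged segments on both top and bottom, and then you would be summing $r/2 + c/2 + c/2$, which overshoots. Your discharging idea (``charge at most $1$ per line'') does not resolve this: if you charge to horizontal lines you get $m \le r$, which is too weak; if you charge to vertical lines, a monomer sitting on the left boundary lives in a single column and you have given no mechanism for spreading its charge across many columns. You have not supplied any device that ties left/right boundary monomers to the horizontal budget, and that is exactly where the argument has to do work.

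The paper solves this by first \emph{moving} every monomer off the vertical sides. It shows that for any monomer $\mu$ on the left or right boundary (not in a corner) there is a \emph{diagonal} of dimers from $\mu$ to the top or bottom edge which can be \emph{flipped} (each dimer rotated $90^\circ$), sliding $\mu$ to a horizontal edge without disturbing any other monomer; the hypothesis $r \le c$ is precisely what guarantees such a diagonal cannot be blocked by rays on both sides. After this reduction every monomer lies in the top row, the bottom row, or at the centre of a vortex. The count is then finished by a binary-string argument: encode the top and bottom rows as $0/1$ strings ($1=$ monomer), observe that each vee on one edge forces a $00$ on the opposite edge and each vortex forces a $000$ on both edges, perform local rewrites that preserve the number of $1$s while eliminating all $11$ substrings, and finally bound the number of $1$s in a $11$-free string of length $\ell$ by $\lceil \ell/2 \rceil$. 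This yields $m \le c - v + 1 \le c+1$, where $v$ is the number of vortices.

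So the two missing ideas are the diagonal-flip reduction (to get all monomers onto the long sides) and the sequence-rewriting bookkeeping; neither appears in your outline, and the discharging sketch as stated does not substitute for them.
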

\begin{proof}
  Let $r,c$ and $m$ be such that $T(r,c,m)>0$ and let $d$ be the
  number of grid squares covered by dimers in an $r\times c$ tatami
  tiling so that $m=rc-d$.  Since $d$ is even, $m$ must have the same
  parity as $rc$.

  It suffices to assume that $r\le c$, and prove that $m\le c+1$.  The
  proof proceeds in two steps.  First, we will show that a monomer on
  a vertical boundary of any tiling can be \emph{mapped} to the top or
  bottom, without altering the position of any other monomer.  Then we
  can restrict our attention to tilings where all monomers appear on
  the top or bottom boundaries, or in the interior.  Secondly, we will
  show that there can be at most $c+1$ monomers on the combined
  horizontal boundaries.


  Let $T$ be a tatami tiling of the $r\times c$ grid with a monomer
  $\mu$ on the left boundary, touching neither the bottom nor the top
  boundary.  The monomer $\mu$ is (a) part of a vee or a loner, or (b)
  is on a jagged segment of a region of horizontal bond.  Define a
  \emph{diagonal} to be $\mu$ together with a set of dimers in this
  region which form a stairway shape from $\mu$ to either the top or
  bottom of the grid as shown in purple in \fref{moveSide}.  If such a
  diagonal exists, a \emph{diagonal flip} can be applied, which
  changes the orientation of its dimers and maps $\mu$ to the other
  end of the diagonal.  In case (a) a diagonal clearly exists since
  it is a source and its ray will hit a horizontal boundary because
  $r \le c$.

  If $\mu$ is on a jagged segment, then we argue by contradiction.
  Suppose neither diagonal exists, then they must each be
  impeded by a distinct ray.  Such rays have this horizontal region to
  the left so the upper one is directed $SE$ and the lower $NE$ and
  they meet the right boundary (before intersecting).  Referring to
  \fref{noMoveSide},
  \begin{align*}
    \alpha + \beta + j =& \gamma + \delta + 1\le r  \\
    \le& c \le c' =  \alpha + \gamma = \beta + \delta,
  \end{align*}
  where $j$ is some odd number.
  Thus $\alpha + \beta + j \le \alpha + \gamma$ implying that $\beta <
  \gamma$.  On the other hand,
  \[
  \gamma + \delta +1  = r \le c \le c' = \beta + \delta
  \]
  implies that $\gamma < \beta$, which is a contradiction.  Therefore
  at least one of the diagonals exists and the monomer can be mapped
  to a horizontal boundary.

\begin{figure}
  \centering

  \subfigure[A diagonal flip.]{\includegraphics[scale = 0.6]{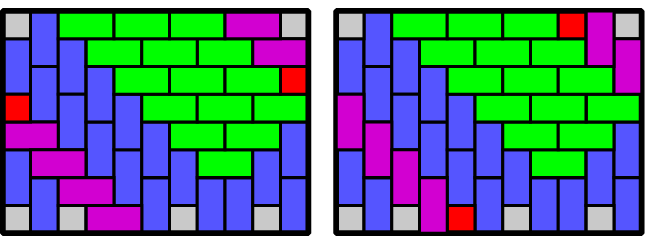}
  \label{fig:moveSide}}
  \def\svgwidth{1.5in}
  \subfigure[The case for vees.]{\includegraphics[width = 1.5in]{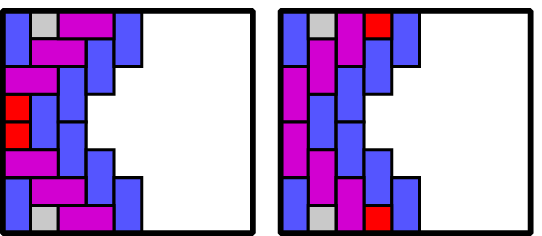}
    \label{fig:moveSideVee} } \def\svgwidth{1in}
  \subfigure[]{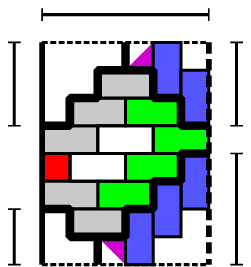
\label{fig:noMoveSide}  }
\caption{(c) If both diagonals are blocked, then $c<r$.  The tiling is
  at least this tall and at most this wide.}
\end{figure}

We may now assume that there are no monomers strictly on the vertical
boundaries of the tiling, and therefore all monomers are either in the
top or bottom rows or in vortices.  Let $v$ be the number of vortices.
Encode the bottom and top rows of the tiling by length $c$ binary
sequences $P$ and $Q$, respectively.  In the sequences, $1$s represent
monomers and $0$s represent squares in dimers.

If $Q$ contains a consecutive pair of $1$s, this represents a vee in
the top row; the vee has a region of horizontal dimers directly below
it.  This region of horizontal bond must reach the bottom row
somewhere, otherwise, by an argument similar to one given previously,
we would have $c<r$ (see \fref{maxMonLimits}).  Therefore, there must
be a consecutive pair or $0$s in $P$ unique to these $1$s in $Q$.
Modify $Q$ so that the $11$ becomes a $1$, and modify $P$ so that the
$00$ becomes $010$.  Do this for each consecutive pair of $1$s in $P$
and $Q$, as illustrated in \fref{maxMonSeq}.  The updated $P$ and $Q$
sequences contain no consecutive pairs of $1$s, but the total number
of $1$s remains unchanged.

Now we show that for each of the $v$ vortices, a $00$ can be removed
from each of $P$ and $Q$ and there will still be no consecutive pairs
of $1$s in the new sequences.  Each vortex generates 4 rays; at least
one of these rays will hit the top boundary, and at least one will hit
the bottom boundary.  In fact, a horizontal and a vertical dimer on
either side of the ray will lie on the upper boundary, and similarly
for the bottom boundary.  \fref{maxMonLimits} is helpful in seeing why
this is true.  These dimers on either side of the ray induce a $000$
in $P$ and another $000$ in $Q$.  (Although not used in this proof,
note that the comments above also apply to bidimers.)  In total, there
are at least $v$ distinct triples of $0$s in each sequence, one for
each vortex.  Now remove $00$ from each triple as in \fref{maxMonSeq}.
The updated $P$ and $Q$ sequences have a combined length of $2c-4v$
and neither of them contains a $11$.  Thus the total number of $1$s is
at most $\lceil |P|/2\rceil + \lceil|Q|/2\rceil$, which is at most
$c-2v+ 1$.  Adding back the $v$ vortex monomers, we conclude that
there are at most $c-v+1$ monomers in total, which finishes the proof.

Note that, to acheive the bound of $c+1$, we must have $v = 0$, and that
the maximum is achieved by a vertical bond pattern.
\end{proof}



 \newsavebox{\tempbox}
 \sbox{\tempbox}{

\begingroup
  \makeatletter
  \providecommand\color[2][]{%
    \errmessage{(Inkscape) Color is used for the text in Inkscape, but the package 'color.sty' is not loaded}
    \renewcommand\color[2][]{}%
  }
  \providecommand\transparent[1]{%
    \errmessage{(Inkscape) Transparency is used (non-zero) for the text in Inkscape, but the package 'transparent.sty' is not loaded}
    \renewcommand\transparent[1]{}%
  }
  \providecommand\rotatebox[2]{#2}
  \ifx\svgwidth\undefined
    \setlength{\unitlength}{216.2pt}
  \else
    \setlength{\unitlength}{\svgwidth}
  \fi
  \global\let\svgwidth\undefined
  \makeatother
  \begin{picture}(1,0.70386216)%
    \put(0,0){\includegraphics[width=\unitlength]{maxMonSeq.eps}}%
    \put(0.69981499,0.6146404){\color[rgb]{0,0,0}\makebox(0,0)[lb]{\smash{$11$}}}%
    \put(0.71091582,0.29271625){\color[rgb]{0,0,0}\makebox(0,0)[lb]{\smash{$1\times $}}}%
    \put(0.55920444,0.36302153){\color[rgb]{0,0,0}\makebox(0,0)[lb]{\smash{$00\cdots$}}}%
    \put(0.55920444,0.04479766){\color[rgb]{0,0,0}\makebox(0,0)[lb]{\smash{$010\cdots$}}}%
    \put(0.29648474,0.36302153){\color[rgb]{0,0,0}\makebox(0,0)[lb]{\smash{$000\cdots$}}}%
    \put(0.29278446,0.6146404){\color[rgb]{0,0,0}\makebox(0,0)[lb]{\smash{$000\cdots$}}}%
    \put(0.29648474,0.29641653){\color[rgb]{0,0,0}\makebox(0,0)[lb]{\smash{$0\times \times \cdots$}}}%
    \put(0.29278446,0.04479766){\color[rgb]{0,0,0}\makebox(0,0)[lb]{\smash{$0\times \times \cdots$}}}%
    \put(0.05190032,0.61402248){\color[rgb]{0,0,0}\makebox(0,0)[lb]{\smash{$T= \cdots$}}}%
    \put(0.05020624,0.36313219){\color[rgb]{0,0,0}\makebox(0,0)[lb]{\smash{$S= \cdots$}}}%
    \put(0.03339893,0.29551179){\color[rgb]{0,0,0}\makebox(0,0)[lb]{\smash{$T= \cdots$}}}%
    \put(0.03826278,0.04377683){\color[rgb]{0,0,0}\makebox(0,0)[lb]{\smash{$S= \cdots$}}}%
  \end{picture}%
\endgroup
}%

\begin{figure}[ht]
  \centering
  \hspace{-5.7cm}
  \subfigure[]{%
    \vbox to \ht\tempbox{ \vfil%
        \includegraphics{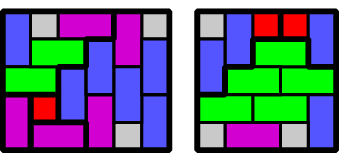}%
        \vfil}%
      \label{fig:maxMonLimits}%
    }%
      \hspace{-5cm}
  \subfigure[]{
    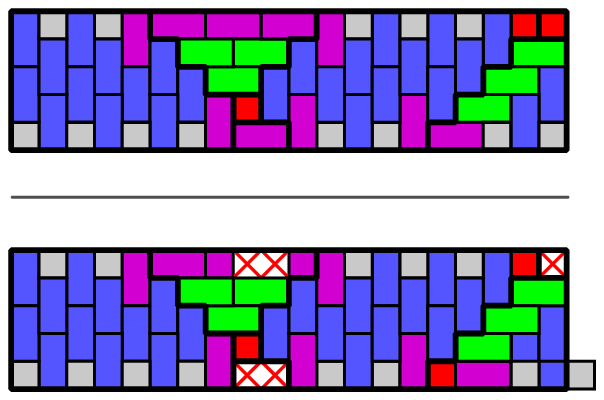    \label{fig:maxMonSeq}}
  \caption{Each vortex and vee is associated with segments of
    monomer-free grid squares shown in purple.  (a) Segments
    associated with vortices have length at least three. Those associated with vees have at
    least two 0s.  (b) The two types of updates to sequences $P$ and $Q$.
    The upper sequences are before the updates and the lower are after updates.
    The symbol $\times $ represents a
    deletion from the sequence.}
  \label{fig:maxMonSequence}
\end{figure}

The converse of \tref{maxmon} is false, for example, Alhazov et
al. (\cite{Alhazov}) show that $T(9,13,1) = 0$.  We now state a couple
of consequences of \tref{maxmon}.

\begin{corollary}\label{cor:maxmonA}
The following three statements are true for tatami tilings of
  an $r \times c$ grid with $r \le c$.
\begin{itemize}
\item[(i)]
The maximum number of monomers in a $r \times c$ grid is $c+1$ if
  $r$ is even and $c$ is odd; otherwise it is $c$.
There is a tatami tiling achieving this maximum.
\item[(ii)]
A tatami tiling with the maximum number of monomers has no vortices.
\item[(iii)]
A tatami tiling with the maximum number of monomers has no bidimers.
\end{itemize}
\end{corollary}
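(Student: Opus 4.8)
The plan is to extract all three parts from the proof of \tref{maxmon} (recall $r\le c$ throughout), adding only one small construction. For part (i), the proof of \tref{maxmon} already gives $m\le c+1$, and since $m$ must have the parity of $rc$, the value $c+1$ is attainable only when $c+1$ has the parity of $rc$; checking the four parity cases for $(r,c)$ shows this happens precisely when $r$ is even and $c$ is odd, and that otherwise $m\le c$. For the matching constructions I would use vertical bond patterns. When $r$ is odd, put exactly one monomer in each column, alternating between the top and the bottom of consecutive columns and filling the rest with vertical dimers; this is a tatami tiling with exactly $c$ monomers. When $r$ is even, put a monomer at the top and a monomer at the bottom of each odd-indexed column and fill each even-indexed column with a solid stack of vertical dimers; since neighbouring columns have different types the tatami condition is immediate, and the monomer count is $c$ when $c$ is even and $c+1$ when $c$ is odd. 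Together with the upper bounds this proves (i).

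For parts (ii) and (iii), I would sharpen the count in the proof of \tref{maxmon}. After moving every boundary monomer into the top or bottom row, that proof encodes those two rows by binary strings $P$ and $Q$ with no two consecutive $1$'s, and shows that each of the $v$ vortices forces a run of three $0$'s in each of $P$ and $Q$; it remarks, without using it, that bidimers force the same runs. The point I would add is that, because $r\le c$, no source can have all four of its diagonally directed rays meet the two vertical sides of the grid — a short case analysis on the four ray directions (and on the boundary-flush cases separately) shows that every vortex and every bidimer forces the deletion of a $00$ from $P$ and a $00$ from $Q$. Writing $b$ for the number of bidimers, this yields $m\le (c+1)-v-2b$. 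If $r$ is even and $c$ is odd, part (i) says the maximum is $c+1$, which forces $v=b=0$. In every other case the maximum is $c$, so $v+2b\le 1$, hence $b=0$ and $v\le 1$; to eliminate $v=1$ I would push every inequality to equality, which forces $P$ and $Q$ to be as jagged as possible, and then note that the running bond regions bounded by the four rays of the vortex couple the top and the bottom segments, via \lref{conditions}, so $P$ and $Q$ cannot simultaneously attain maximal weight — a contradiction.

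The main obstacle is the bidimer bookkeeping. A bidimer need not be in generic position: it may sit flush against a vertical side or against a horizontal boundary, in which case some of its rays degenerate and the rest may meet the vertical sides rather than the top and bottom. Making sure that every bidimer (and every vortex) still forces the deletion of a $00$ from each of $P$ and $Q$ requires a careful case analysis — using $r\le c$, and, when $r=c$, passing to the transpose tiling, which preserves the monomer count while exchanging the two pairs of sides. The parallel tightness analysis that rules out a lone vortex is the other place where small parity-and-length calculations must be done with care; everything else is a direct consequence of \tref{maxmon} and its proof.
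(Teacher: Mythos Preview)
Your argument tracks the paper closely. For (i) the paper also invokes the parity half of \tref{maxmon} to decide between $c$ and $c+1$, and exhibits a vertical running bond as the extremal tiling; your explicit column-by-column description is that same construction spelled out. For (iii) the paper's proof is one sentence: the vortex bookkeeping in \tref{maxmon} applies to bidimers as well, but with no monomer to ``add back,'' so effectively $m\le c+1-v-2b$ and hence $b=0$ at the maximum --- exactly your bound.

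Where you diverge is (ii). The paper's entire proof of (ii) is ``This was noted at the end of the proof of \tref{maxmon},'' and that note literally says only that reaching the bound $c+1$ forces $v=0$. You correctly flag that when the maximum is $c$ the inequality $m\le c-v+1$ gives only $v\le1$, and you sketch a further tightness argument to exclude $v=1$. The paper does not write this step out; your caution here is warranted, though your sketch (forcing $P$ and $Q$ to be maximally jagged and then deriving a conflict across the vortex via \lref{conditions}) would still need to be made precise to stand as a proof.

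The case analysis you propose for bidimers flush against a boundary, and the transpose trick for $r=c$, are absent from the paper: it simply asserts parenthetically inside the proof of \tref{maxmon} that the ray-hits-top-and-bottom argument for vortices ``also applies to bidimers'' and moves on. So this is extra care on your part rather than a genuinely different method.
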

\begin{proof}
  (i) That this is the correct maximum value can be inferred from
  \tref{maxmon}.  A tiling consisting only of vertical running bond
  achieves it, for example.

(ii) This was noted at the end of the proof of \tref{maxmon}.

(iii) We can again use the same sort of reasoning that was used for
vortices in \tref{maxmon}, but there is no need to ``add back'' the
monomers, since bidimers do not contain one.
\end{proof}

\subsection{Square tatami tilings}

In this section, we show that $T(n,n,n) = n2^{n-1}$. \tref{tnnn} relies on the following lemma and corollary.

\begin{lemma}
  \label{lem:flips}
  For each $n\times n$ tiling with $n$ monomers, a trivial tiling can be obtained via a finite sequence of diagonal flips in which each monomer moves at most once.  Reversing this sequence returns us to the original tiling.
\end{lemma}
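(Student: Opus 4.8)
The plan is to bring the tiling's \T-diagram to a trivial one by peeling off its features one at a time, each peel a single diagonal flip relocating exactly one monomer, and to do the peeling in an order that never relocates a monomer twice. Granting this, the ``reversing'' statement is free: a diagonal flip is undone by another diagonal flip (retrace the stairway and the moved monomer returns to its original cell), so if $f_1,\dots,f_k$ are the flips reaching the trivial tiling, then $f_k,\dots,f_1$ applied to the trivial tiling rebuild the original, moving exactly the same monomers, each still at most once.

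To set up, since $r=c=n$ the maximum monomer count is $n$ by \cref{maxmonA}(i), so by \cref{maxmonA}(ii)--(iii) a tiling with $n$ monomers has no vortices and no bidimers. Hence every feature is a vee or a loner, every ray starts on the grid boundary, and — there being no interior monomers — every monomer lies on the grid boundary. Since both $r\le c$ and $c\le r$ hold, the ray analysis in the proof of \tref{maxmon} applies in both orientations: a ray from a source on a vertical side reaches the top or bottom side, and a ray from a source on a horizontal side reaches the left or right side. So every ray crosses between two perpendicular sides, and the non-crossing rays sit in a laminar arrangement nested around the four corners.

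Now I would induct on the number of features. If there are none the \T-diagram is trivial. Otherwise, fix a corner — say the bottom-left — and let $\mathcal R$ be the running-bond region containing it. Being non-trivial, the tiling makes $\mathcal R$ bounded by at least one ray; let $R$ be the one closest to that corner. Its source is a vee or loner with a source monomer $\mu$, and, exactly as in the proof of \tref{maxmon}, $\mu$ has an available diagonal — a stairway of dimers reaching a side of the grid. Flipping $\mu$ along it deletes $R$, merges $\mathcal R$ with the region beyond $R$ into a single running-bond region, and carries $\mu$ to a boundary cell on the border of the enlarged $\mathcal R$ near the corner, where it is part of no feature. The number of features drops by one, so we may recurse on the smaller \T-diagram.

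The crux — and the step I expect to be the main obstacle — is making this scheme globally consistent: one must verify that peeling the corner-closest feature creates no new feature and that the monomer just deposited near the corner is never touched by a later flip. This is where the rigidity forced by \cref{maxmonA} and \lref{conditions} must be used: with every ray crossing perpendicular sides and no vortices or bidimers, a corner-outward sweep should visit each ray exactly once while the region hugging the corner only grows and only ever gains monomers on its border, so that already-moved monomers stay frozen. Making that sweep precise, and re-verifying along the way — via the $r=c=n$ specialization of the size-and-parity estimate in the proof of \tref{maxmon} — that the diagonals used are unobstructed, is the technical heart; the rest is bookkeeping.
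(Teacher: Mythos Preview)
Your high-level plan---reduce to a trivial tiling via diagonal flips, one monomer moved per flip, then argue no monomer moves twice---is right, and your preliminary observations (no vortices or bidimers by \cref{maxmonA}, every ray spans two perpendicular sides, hence rays nest around the four corners) are exactly what the argument needs. But the scheme you sketch, ``fix one corner and peel its innermost bounding ray,'' is left incomplete, as you yourself flag, and two of its working claims are shaky as stated. First, the assertion that the flip ``carries $\mu$ to a boundary cell \ldots\ near the corner'' tacitly assumes $R$ belongs to your fixed corner; once your corner region has merged out to the central bond, the ray $R$ bounding it may well belong to a \emph{different} corner, and then the available diagonal sends $\mu$ toward that corner, not yours. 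Second, ``flipping $\mu$ deletes $R$'' needs an argument: a diagonal lives in a running-bond region adjacent to the source, not along the ray itself, so you must check that the flip actually dissolves the feature rather than merely relocating its monomer.

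The paper's proof sidesteps all of this by running the sweep in the opposite direction, governed by a single monovariant. Instead of fixing one corner and growing outward, it treats all four corners at once: for each, take the ray \emph{farthest} from it (or the corner itself if none), and let $A(T)$ be the area of the \emph{central running bond} enclosed by these four choices. At each step, pick one of these outermost rays $\rho$ and flip the diagonal $\delta$ touching $\rho$ on its corner side. In $T$ the diagonal $\delta$ lies outside the central region; in the resulting $T'$ it lies inside, so $A(T')>A(T)$ and the process terminates at a trivial tiling. The moved monomer lands in the growing central region, while every subsequent $\delta$ lies strictly outside it---so no monomer is ever moved twice. The monovariant delivers termination and the ``at most once'' guarantee in one stroke, with no need to track which corner a ray belongs to or to hand-verify cross-corner consistency.
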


\begin{proof}

  Let $T$ be the \T-diagram of an $n\times n$ tiling with $n$
  monomers.  Each ray $\rho$ in $T$ touches two adjacent boundaries
  which form a corner $\gamma$, so $\rho$ and $\gamma$ are said to belong to
  each other. For each corner $\gamma$, choose the ray which belongs to it
  and is farthest away from it; if a corner does not have a ray, then
  choose the corner itself. Between the four chosen rays/corners, our
  tiling can only contain either horizontal or vertical running bond
  (by \lref{conditions}).  Let $A(T)$ be the area of this
  \emph{central running bond}.
  
  We begin a sequence of diagonal flips by choosing one ray $\rho$
  that is farthest from its corner and flipping the diagonal $\delta$
  touching $\rho$ that is between $\rho$ and its corner. Let $T'$
  be the resulting \T-diagram.  In $T$, $\delta$ is not part of the central
  running bond and in $T'$, it is; thus $A(T')>A(T)$. Continuing this
  process yields a trivial tiling via a finite sequence of diagonal
  flips.

\begin{figure}
  \centering
    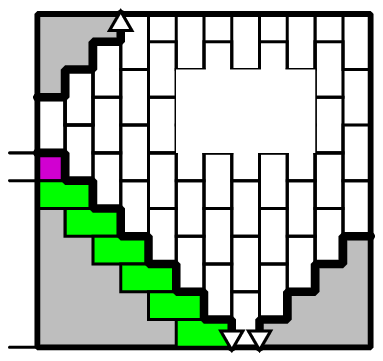
    \caption{In the tiling $T$ from \lref{flips}, the ray $\rho$
      belongs to the corner $\gamma$ and it is associated with the
      diagonal $\delta$. The area $A(T)$ counts the grid squares that
      are not between any ray and its corner. The monomer that is
      moved in the diagonal flip becomes part of $A(T')$ and is
      therefore moved only once in the sequence.  The corner monomers
      are never moved.}
\label{fig:rcorner}
\end{figure}


\end{proof}

\begin{corollary}\label{cor:adjcorn}
Every $n \times n$ tiling with $n$ monomers has two corner monomers and they are in adjacent corners.
\end{corollary}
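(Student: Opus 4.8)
The plan is to reduce to trivial tilings by means of Lemma~\ref{lem:flips} and then to dispose of those by direct inspection. By Corollary~\ref{cor:maxmonA}(ii),(iii) a maximum-monomer $n\times n$ tiling has neither vortices nor bidimers, so Lemma~\ref{lem:flips} applies; since a diagonal flip neither creates nor destroys monomers, the trivial tiling $T_{0}$ it produces again has $n$ monomers. Now $T_{0}$ is one of the four running-bond configurations (two with vertical bond, two with horizontal), so its monomers all lie in the two extreme rows of the grid if the bond is vertical, or in the two extreme columns if it is horizontal; a direct check of the four configurations shows that in each of them exactly two of the four corner cells are monomers and that those two corners are adjacent. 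So the corollary holds whenever $T$ is trivial.

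To transfer this to an arbitrary $n\times n$ tiling $T$ with $n$ monomers, recall from Lemma~\ref{lem:flips} that the flip sequence carrying $T$ to $T_{0}$ never moves a corner monomer; hence every corner cell that is a monomer in $T$ is still a monomer, at the same corner, in $T_{0}$. Together with the previous paragraph this already forces $T$ to have at most two corner monomers, and, if exactly two, then adjacent ones. It remains to show that $T$ has at least two corner monomers, and for that it suffices to show that no flip in the sequence deposits a monomer onto a corner cell that was covered by a dimer: then the two corner monomers of $T_{0}$ were already monomers of $T$, so $T$ and $T_{0}$ have exactly the same corner monomers and we are done.

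The crux is that last assertion. What must be ruled out is a flip whose flipped staircase terminates at one of the four corner cells of the grid, since such a flip would carry the source monomer onto that corner. For this I would return to the construction in Lemma~\ref{lem:flips}: each flip in the sequence acts on the diagonal lying immediately between a ray $\rho$ and the corner $\gamma$ it belongs to, so the far end of that staircase meets a boundary side strictly between $\gamma$ and the point at which $\rho$ leaves the grid, and is therefore not the corner cell $\gamma$ itself. Pinning this down rigorously --- in particular dealing with corners to which several rays belong, where the flips are performed in order of decreasing distance from the corner --- is the one genuinely fiddly point; I would expect it to be folded into the proof of Lemma~\ref{lem:flips}, or else finished off by a short parity count via Theorem~\ref{thm:maxmon}. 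Granting it, the corollary is immediate.
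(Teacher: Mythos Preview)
Your approach is essentially the paper's: reduce to a trivial tiling via Lemma~\ref{lem:flips}, verify directly that each trivial $n\times n$ tiling with $n$ monomers has exactly two corner monomers in adjacent corners, and then argue that the flip sequence leaves the set of corner monomers invariant.

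The only difference is in packaging. You split ``corner monomers are invariant'' into two directions (none leaves a corner; none arrives at a corner) and treat them separately, flagging the second as the fiddly one. The paper handles both at once with the single observation that \emph{no diagonal chosen in the sequence contains a corner cell}, because such a diagonal is never between a ray and its associated corner (it is the diagonal \emph{touching} the ray, hence one step in from the corner). Since a flip can only change the contents of the corner cell if the flipped diagonal includes that cell, this one geometric fact disposes of both your directions simultaneously. The paper does not spell out the geometry any more carefully than your sketch does; it simply asserts it, pointing to Figure~\ref{fig:rcorner}. So your ``genuinely fiddly point'' is exactly the content of the paper's one-line justification, and your instinct that it belongs with Lemma~\ref{lem:flips} is correct---that is where the paper's figure caption places it.

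Minor remarks: your appeal to Corollary~\ref{cor:maxmonA} to exclude vortices and bidimers is not needed, since Lemma~\ref{lem:flips} is already stated for arbitrary $n\times n$ tilings with $n$ monomers; and your observation that diagonal flips preserve the monomer count is correct but likewise unnecessary for the argument.
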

\begin{proof}
The sequence of diagonals chosen for diagonal flips described in \lref{flips} never includes a diagonal containing a corner monomer because such a diagonal is never between a ray and its associated corner.  As such, the corner monomers are fixed throughout the sequence of diagonal flips yielding a trivial tiling. Since a trivial $n \times n$ tiling with $n$ monomers has two monomers in adjacent corners, then, so must every other $n \times n$ tiling with $n$ monomers.
\end{proof}

\cref{adjcorn} show that the four rotations of any $n \times n$ tiling
with $n$ monomers are distinct.   We call the rotation
with monomers in the top two corners the \emph{canonical case}.


\begin{theorem}\label{thm:tnnn}
  The number of $n\times n$ tilings with $n$ monomers, $T(n,n,n)$, is $n2^{n-1}$.
\end{theorem}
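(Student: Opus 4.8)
The plan is to cut the problem down to the \emph{canonical case} already singled out before the theorem, describe the structure of those tilings explicitly, and count them.

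First dispose of $n=1$ directly ($T(1,1,1)=1=1\cdot2^{0}$) and assume $n\ge 2$. By \cref{adjcorn} the two corner monomers of an $n\times n$ tiling with $n$ monomers occupy adjacent corners, and as observed just after that corollary the four rotations of such a tiling are pairwise distinct, exactly one of them canonical. Hence $T(n,n,n)=4C_n$, where $C_n$ counts canonical tilings, and it remains to prove $C_n=n2^{\,n-3}$.

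The heart of the argument is a rigid structural description of canonical tilings. Since such a tiling attains the maximum monomer count $n$, \cref{maxmonA} gives that it has no vortex and no bidimer; thus every feature is a vee or a loner, every feature meets the boundary, and every one of the $n$ monomers lies on the boundary. I would then traverse the boundary from the top-left corner and, using \lref{conditions} to force the orientation of each ray and the parity of the gap between adjacent rays, show that a canonical tiling is determined by a small amount of data: one $\Theta(n)$-valued parameter marking where the central running-bond orientation ``pivots,'' together with an independent binary choice at each of the remaining $\Theta(n)$ boundary monomer positions. A convenient way to organize this is the diagonal-flip calculus of \lref{flips}: every canonical tiling is reached from a canonical \emph{trivial} tiling by flipping one diagonal per off-corner monomer, reversibly and with each monomer moving once, so the binary choices are literally ``flip this diagonal or not,'' and one only has to classify the canonical trivial tilings and check which flip patterns preserve the tatami property.

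Once this parametrization is established, counting the admissible data gives $C_n = n2^{\,n-3}$, hence $T(n,n,n)=4C_n=n2^{\,n-1}$; it is worth checking the outcome against $C_2=1$ and $C_3=3$, i.e.\ $T(2,2,2)=4$ and $T(3,3,3)=12$. The main obstacle is the structural step: one must show the boundary data is both \emph{sufficient} to reconstruct the tiling (the philosophy of \lref{conditionbenefits}(ii), but it has to be made explicit for this extremal family) and \emph{free}, so that no hidden constraints beyond the parities of \lref{conditions} shrink the count. Understanding precisely how vees and loners anchored at the two top corners interact with the jagged and smooth segments along the other three sides --- and why this forces the product form $n\cdot 2^{\,n-3}$ --- is where the real work lies; the reduction to the canonical case and the final tally are then short.
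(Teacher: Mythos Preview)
Your setup matches the paper's: reduce to the canonical case via \cref{adjcorn}, invoke \cref{maxmonA} to kill vortices and bidimers so every monomer sits on the boundary, and use the diagonal-flip calculus of \lref{flips} to generate all canonical tilings from the trivial one.  Where you diverge is at the actual count, and that is where the gap is.

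The paper does \emph{not} exhibit a clean product parametrization ``one $n$-valued pivot parameter times $n-3$ independent binary flips.''  Instead it singles out two specific monomers $w,e$ (the bottommost on each side in the even case; the outermost bottom monomers in the odd case) and splits into two regimes.  If $w$ or $e$ is flipped toward the centre, that flip blocks all orthogonal flips and leaves exactly $n-3$ genuinely independent binary choices, contributing $2^{n-2}$.  Otherwise $w$ and $e$ move independently (four options), and the remaining $n-4$ monomers are put in bijection with the non-corner monomers of the rotated $(n-2)\times(n-2)$ canonical trivial tiling; the paper then checks that flip-compatibility is preserved under this bijection, yielding $4S(n-2)$.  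Together this gives the recurrence $S(n)=2^{n-2}+4S(n-2)$, which solves to $n2^{n-3}$.

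Your proposal asserts the flips are ``independent binary choices'' modulo one global parameter, but the paper's analysis shows this is false as stated: flipping $w$ toward the centre genuinely constrains the \emph{direction} of every other flip, not just a parity.  Discovering that the dependency structure is governed by those two distinguished monomers, and that the residual problem after fixing them is literally the $(n-2)$ problem, is the substantive idea you are missing.  Your outline would likely rediscover this once you tried to make the ``free'' claim precise, but as written it is a plan with the key combinatorial step left open.
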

\begin{proof}
  We count the $n\times n$ tilings with $n$ monomers up to rotational symmetry by counting the canonical cases only.  Let $S(n) = T(n,n,n)/4$.  We will give a combinatorial proof that $S(n)$ satisfies the following recurrence:
  \begin{align}
    S(n) = 2^{n-2} + 4 S(n-2)  = n 2^{n-3}  \text{ where } S(1) = S(2)
= 1.
\label{eq:Sn}
  \end{align}
  In \tref{maxmon} we defined a diagonal flip which results in a
  monomer $\mu$ moving up or down depending on the orientation of its
  diagonal.  As such, we simplify our terminology by referring to
  flipping a monomer in a particular direction (up, down, left, or
  right).

  We treat the even and odd cases separately, though the proofs are
  naturally similar. In both cases, we begin with the canonical
  trivial case and consider all possible sequences of flips in which
  each monomer is moved at most once and the corner monomers are fixed.  By
  Lemma 3 and its corollary, this counts the canonical tilings.

  The canonical trivial case for even $n$, shown in
  \fref{evenbasecase:a} for $n=8$, is a horizontal running bond tiling
  with fixed (black) monomers in the top corners and $n/2$ (red and
  yellow) monomers on both the left and right boundaries.  We classify
  the tilings according to what happens to the bottom (yellow) monomer
  on each of these boundaries, which we will call $w$ and $e$.

\begin{figure}[h]
\centering
  \subfigure[]{\includegraphics[scale = 0.7]{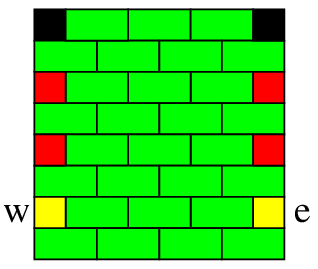}
    \label{fig:evenbasecase:a}
  }
  \subfigure[]{\includegraphics[scale = 0.7]{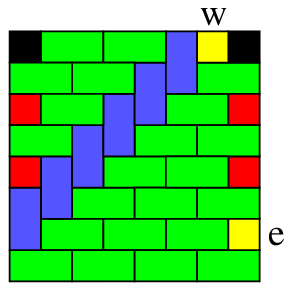}
     \label{fig:evenbasecase:b}
   }
  \subfigure[]{\includegraphics[scale = 0.7]{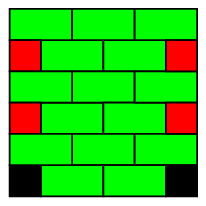}
     \label{fig:evenbasecase:c}
   }
  \subfigure[]{\includegraphics[scale = 0.7]{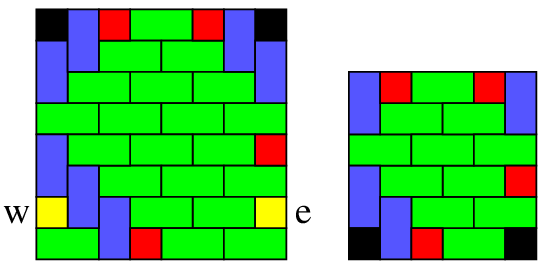}
     \label{fig:evenbasecase:d}
   }
   \caption{(a) Canonical trivial case for an $8\times 8$ square with $8$ monomers. (b) Flipping $w$ up.  (c) 180 degree rotation of the canonical trivial case for a $6\times 6$ square with $6$ monomers. (d) An $8\times 8$ tiling with its associated $6\times 6$ tiling.}
\label{fig:evenbasecase}
\end{figure}

First, suppose $\mu \in \{w,e\}$ is flipped up as shown in \fref{evenbasecase:b}. Because our tiling is square, this flip inhibits any orthogonal diagonal flips and thus the monomers that shared a boundary with $\mu$ before it was flipped up can only be flipped up and monomers on the opposite boundary can only be flipped down. There are $n-3$ such monomers that are not fixed and can be flipped independently of each other. This gives $2^{n-3}$ possibilities when either $w$ or $e$ is flipped up, resulting in a total of $2^{n-2}$ tilings.

If neither $w$ nor $e$ is flipped up, these monomers can be flipped (or not) independently of each other and of other non-fixed monomers, as shown in \fref{even_reddown}. As such, we can now ignore what happens to $w$ and $e$ and consider them fixed, keeping in mind that for each such tiling, there are three others with $w$ and $e$ in different positions.  We will find a one-to-one correspondence between these tilings and one quarter of the $(n-2)\times (n-2)$ tilings with $n-2$ monomers by mapping the monomers of the canonical trivial cases (rotated by 180 degrees in the smaller case) and showing that any sequence of flips in one case can be applied to the equivalent monomers in the other.

\begin{figure}
  \centering
    \includegraphics[scale = 0.7]{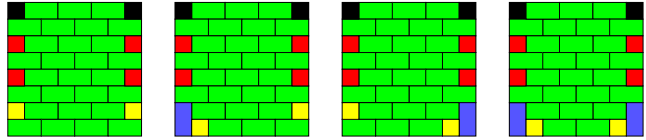}
\caption{The four possibilities for flipping $w$ and/or $e$ down.}
\label{fig:even_reddown}
\end{figure}

There are $n-4$ (red) monomers on the left and right boundaries of the $n \times n$ canonical case that we have not fixed.  Consider the 180 degree rotation of the canonical trivial case for $(n-2)\times (n-2)$ tilings with $n-2$ monomers which has fixed (black) monomers in the \emph{bottom} corners, as shown in \fref{evenbasecase:c} for $n-2=6$. Associate the $n-4$ non-fixed (red) monomers of this tiling with the $n-4$ non-fixed (red) monomers of the $n\times n$ canonical trivial case in a natural way: pairing those in the same position relative to the bottom fixed monomers. Similarly, diagonals containing associated monomers are also associated.

We need to show that compatibility between diagonal flips is preserved between the smaller and larger cases: that is, if two diagonals cannot both be flipped in the larger square, the same is true for the corresponding diagonals in the smaller square, and vice versa.

In both cases, two monomers on the same boundary can both be flipped if and only if they are either flipped in the same direction or the top one is flipped up and the bottom one is flipped down; compatibility is preserved.

For a pair of monomers on opposite boundaries, observe that a conflict between flips can only occur if we try to flip them both in the same direction.  Further, conflict depends entirely on the distance of the monomers from the horizontal centerline of the grid. Let $d_w$ and $d_e$ respectively be the distances from the horizontal centerline, with negative values below the line and positive values above.  If $d_w + d_e > 0$, then the two monomers cannot both be flipped down, and similarly, if $d_w + d_e <0$, they cannot both be flipped up.  This distance is preserved between the associated monomers in the larger and smaller squares and thus compatibility is also preserved.

There are $S(n-2)$ ways of flipping the monomers of the (rotated) $(n-2)\times (n-2)$ canonical trivial case, and thus $S(n-2)$ ways of flipping the corresponding monomers of the $n\times n$ canonical trivial case. This yields $4S(n-2)$ tilings, one for each way of positioning $w$ and $e$ and establishes (\ref{eq:Sn}) for even $n$.

The canonical trivial case for odd $n$, shown in \fref{oddbasecase:a} for $n=7$, is a vertical running bond tiling with (black) monomers in the top corners.  It has $\lceil n/2\rceil$ monomers on the top boundary and $\lfloor n/2 \rfloor$ monomers on the bottom boundary.  Label the bottom left and bottom right monomers $w$ and $e$ respectively.

\begin{figure}
  \centering
  \subfigure[]{\includegraphics[scale = 0.7]{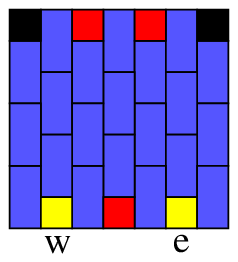}
    \label{fig:oddbasecase:a}
  }
  \subfigure[]{\includegraphics[scale = 0.7]{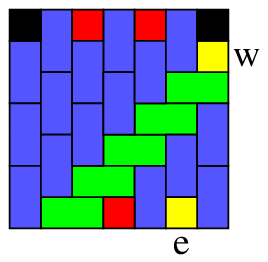}
     \label{fig:oddbasecase:b}
   }
  \subfigure[]{\includegraphics[scale = 0.7]{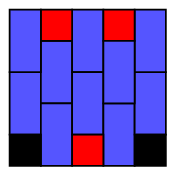}
     \label{fig:oddbasecase:c}
   }
  \subfigure[]{\includegraphics[scale = 0.7]{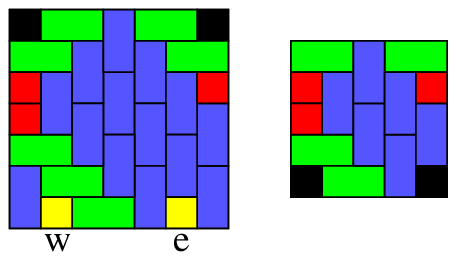}
     \label{fig:oddbasecase:d}
   }
   \caption{(a) Canonical trivial case for a $7\times 7$ square with $7$ monomers. (b) Flipping $w$ to the right.  (c) Canonical trivial case for a $5\times 5$ square with $5$ monomers. (d) An $7\times 7$ tiling with its associated $5\times 5$ tiling.}
\label{fig:oddbasecase}
\end{figure}

Similar to the even case, if either $w$ is flipped right (as in \fref{oddbasecase:b}) or $e$ is flipped left, there are $n-3$ monomers which can be flipped independently to obtain other tilings and this yields $2^{n-2}$ tilings.

Otherwise, $w$ and $e$ can be flipped left and right (respectively) independently of each other and of other monomers, as shown in \fref{odd_reddown}.  Again we fix $w$ and $e$, keeping in mind that for each such tiling, there are three others with $w$ and $e$ in different positions.  We will find a similar one-to-one correspondence to the one in the even case.

\begin{figure}
  \centering
    \includegraphics[scale = 0.7]{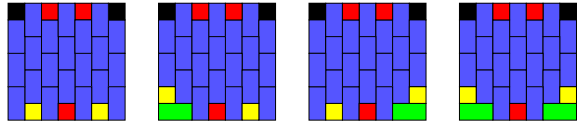}
    \caption{The four possibilities for flipping (or not flipping) $w$ and $e$ to the left and right respectively.}
\label{fig:odd_reddown}
\end{figure}

There are $n-4$ (red) monomers on the top and bottom boundaries of the
canonical trivial case that we have not fixed. Once again, we associate these monomers with those of the 180 degree rotation of the $(n-2)\times (n-2)$ canonical trivial case with $n-2$ monomers which has fixed (black) monomers in the \emph{bottom} corners. Arguing as in the even case, with a vertical centerline rather than a horizontal one, we conclude that a pair of monomers in the $n\times n$ tiling can be flipped if and only if the corresponding flips can be made in the $(n-2)\times (n-2)$ tiling. Again this yields $4(S(n-2))$ tilings and establishes (\ref{eq:Sn}) for odd $n$.
\end{proof}

\subsection{Fixed height tatami tilings}
\label{sec:fixedheight}

In this section we show that for a fixed number of rows $r$, the
ordinary generating function of the number of tilings of an $r \times
c$ rectangle is a rational function. We will show that, for each value
of $r$, the number of fixed-height tilings satisfies a system of
linear recurrences with constant coefficients. We will derive the
recurrences for small values of $r$ and then discuss an algorithm
which can be used for larger values of $r$.

\vspace{0.3cm}
 Let $T_r(z)$ denote the generating function
 \begin{align*}
T_r(z) = \sum_{c \ge 0} T(r,c) z^c.
\end{align*}
For $c\geq 2$, a tatami tiling of a $1\times c$ rectangle begins with
either a monomer or a dimer. Thus, $T(1,c)=T(1,c-1)+T(1,c-2)$ for
$c\geq 2$, where $T(1,0)=1$ and $T(1,1)=1$. This is the well known
Fibonnaci recurrence. Since it is a linear recurrence with constant
coefficients, it is not a difficult task to verify that
\begin{align*}
T_1(z)=\frac{1+z}{1-z-z^2}.
\end{align*}

For each $r\ge 2$ we derive a recurrence based on partial tilings
which can be solved using mathematical software such as Maple. A
partial tiling of an $r\times c$ grid is a \emph{minimal} $r\times k$
\emph{tiling} if and only if the first $k$ columns are covered and no
tile lies entirely outside of these columns.  The $r$ and $k$ may
sometimes be omitted. Let $\mathcal{S}_r$ be the set of configurations
which correspond to a minimal $r\times 1$ tiling. For $s_{v}\in
\mathcal{S}_r$, let $v$ be a ternary $r$-tuple whose elements
correspond to the grid squares of the first column, ordered from top
to bottom.  The elements $0,1$, and $2$, each represent a grid square
covered by a vertical dimer, monomer, or horizontal dimer,
respectively.  Note that $0$s always appear in consecutive pairs. For
example, $s_{0012002}\in \mathcal{S}_7$ corresponds to the minimal
$7\times 1$ tiling shown in \fref{7example:a}.

\begin{figure}[ht]
\centering
  \subfigure[]{ \includegraphics{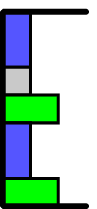}
    \label{fig:7example:a}
  }
\hspace{2cm}
  \subfigure[]{ \includegraphics{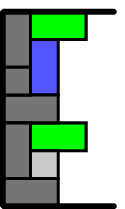}
    \label{fig:7example:b}
  }
  \hspace{2cm}
  \subfigure[]{ \includegraphics{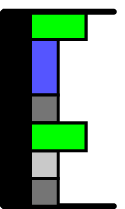}
    \label{fig:7example:c}
  }
  \caption{(a) The minimal $7\times 1$ tiling corresponding to
    $s_{0012002}$.  (b) A possible extension of the minimal tiling in
    (a).  (c) Removing the first column yields a new minimal tiling,
    represented by $s_{2001211}$. }
  \label{fig:7example}
\end{figure}

For $c\geq 1$, let $t_{r,v}(c)$ be the number of tilings of an
$r\times c$ rectangle that begin with the minimal $r\times 1$ tiling
$s_{v}$. Note that $t_{r,v}(1)=1$ if $v$ does not contain a 2, and is
zero otherwise. To derive a recurrence we determine all ways of
extending each configuration in $s_{v}$ to a minimal $r\times 2$
tiling.  By taking each of these minimal $r\times 2$ tilings and
chopping off the first column we can match these tilings to an element
in $\mathcal{S}_r$ which will define a recurrence.  \fref{7example:b}
shows an extension of the tiling $s_{0012002}$ and \fref{7example:c}
shows that this extension corresponds to the configuration
$s_{2001211}$.  Notice that \fref{7example:c} can only be extended
once more.

\begin{lemma} \label{r=2} $T_2(z)=\frac{1+2z^2-z^3}{1-2z-2z^3+z^4}$.
\end{lemma}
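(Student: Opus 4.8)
The plan is to specialize the minimal-tiling recurrence framework just set up to the case $r=2$: enumerate the state set $\mathcal{S}_2$, compute all transitions between states, turn the resulting system of linear recurrences into a linear system over $\mathbb{Q}(z)$ for the generating functions $g_v(z) = \sum_{c \ge 1} t_{2,v}(c)\, z^c$, solve it, and read off $T_2(z) = T(2,0) + \sum_{v \in \mathcal{S}_2} g_v(z) = 1 + \sum_{v \in \mathcal{S}_2} g_v(z)$.

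First I would list $\mathcal{S}_2$. A column of a minimal $2\times 1$ tiling has only two squares, and since $0$s occur in consecutive pairs the ternary $2$-tuple $v$ must be one of $00$, $11$, $12$, $21$, $22$; each of these is easily checked to be a legitimate minimal $2\times 1$ tiling. Thus $|\mathcal{S}_2| = 5$, and the top--bottom reflection of the grid exchanges $s_{12}$ and $s_{21}$, a symmetry I would exploit to shorten the bookkeeping.

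Next, for each $s_v \in \mathcal{S}_2$ I would enumerate every minimal $2 \times 2$ tiling whose first column realizes $s_v$: each square of the second column that is not already covered by a horizontal dimer protruding from column $1$ must be filled by a monomer, jointly with its neighbour by a vertical dimer, or by a horizontal dimer protruding into column $3$, and the resulting partial tiling must obey the tatami condition that no four tile corners coincide. Chopping off column $1$---so that a horizontal dimer straddling columns $1$ and $2$ leaves a monomer in the new first column, while vertical dimers and column-$1$ monomers vanish---maps each such minimal $2\times 2$ tiling to an element $s_{v'} \in \mathcal{S}_2$, and recording these arrows gives, for $c \ge 2$, the recurrence $t_{2,v}(c) = \sum_{s_v \to s_{v'}} t_{2,v'}(c-1)$, with base values $t_{2,00}(1) = t_{2,11}(1) = 1$ and $t_{2,12}(1) = t_{2,21}(1) = t_{2,22}(1) = 0$. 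I would also confirm that chopping is a bijection between the tilings counted by $t_{2,v}(c)$ and the extended-then-chopped tilings counted on the right, so that the recurrence is exact. I expect this enumeration to be the main obstacle: it is a finite check, but the tatami condition kills several of the naive extensions---for instance the two stacked monomers of $s_{11}$ force column $2$ to be a vertical dimer, and $s_{22}$ admits only the trivial extension with nothing placed in column $3$---so it must be done with care, since an omission here would silently corrupt the final rational function.

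Finally, the recurrence translates into the linear system $g_v(z) = t_{2,v}(1)\,z + z \sum_{s_v \to s_{v'}} g_{v'}(z)$; using $g_{12} = g_{21}$ this is a handful of equations in $g_{00}, g_{11}, g_{12}, g_{22}$ that one solves directly (or via computer algebra, as the authors indicate), finding e.g.\ $g_{00}(z) = z(1 + 2z^2 - z^3)/(1 - 2z - 2z^3 + z^4)$, with the sum $1 + \sum_v g_v(z)$ simplifying to the claimed $T_2(z) = (1 + 2z^2 - z^3)/(1 - 2z - 2z^3 + z^4)$. As a sanity check I would expand the right-hand side as $1 + 2z + 6z^2 + 13z^3 + \cdots$ and verify the low-order coefficients by counting tatami tilings of the $2\times 1$, $2\times 2$, and $2\times 3$ rectangles by hand, obtaining $2$, $6$, and $13$.
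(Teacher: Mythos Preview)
Your plan is correct and follows essentially the same route as the paper: enumerate $\mathcal{S}_2=\{s_{00},s_{11},s_{12},s_{21},s_{22}\}$, work out all tatami-legal extensions of each state to a minimal $2\times 2$ tiling, chop off the first column to obtain a system of five linear constant-coefficient recurrences with the stated initial values, and solve. The paper lays the transitions out explicitly in a small table (yielding $t_{00}\to$ all five states, $t_{11}\to t_{00}$, $t_{12}\to t_{11},t_{21}$, $t_{21}\to t_{11},t_{12}$, $t_{22}\to t_{11}$); your use of the reflection symmetry $g_{12}=g_{21}$ and the observation $g_{00}(z)=zT_2(z)$ are convenient shortcuts but do not change the argument.
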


\newcommand{\ig}[1]{\vspace{0.15cm}\includegraphics[scale=0.75]{#1}}
\newcommand{\igs}[1]{\vspace{0.15cm}\includegraphics[scale=0.5]{#1}}
\begin{proof}

  For $r=2$, we have
  $\mathcal{S}_2=\{s_{00},s_{11},s_{12},s_{21},s_{22}\}$. Since
  $\mathcal{S}_2$ contains all possible ways to start a tiling of an
  $r\times c$ rectangle, with $c\geq 2$, it follows that
\begin{equation}
T(2,c)=t_{00}(c)+t_{11}(c)+t_{12}(c)+t_{21}(c)+t_{22}(c).
\label{eq:r=2}
\end{equation}
The initial conditions are
$t_{00}(1)=1,t_{11}(1)=1,t_{12}(1)=0,t_{21}(1)=0$, and $t_{22}(1)=0$. To
derive the recurrence we consider the ways of extending each of the
minimal $2\times 1$ tilings in $\mathcal{S}_2$ to a minimal $2\times
2$ tiling.

\newcolumntype{S}{>{\arraybackslash} m{.1\textwidth} }
\newcolumntype{E}{>{\arraybackslash} m{.3\textwidth} }
\newcolumntype{R}{>{\arraybackslash} m{.4\textwidth} }

\begin{center}
\begin{tabular}{c|S|E|R}
   & Start & Extensions & Recurrences\\
  \hline
  \hline
  $S_{00}$ & \ig{s00} & \ig{ext00} & \ig{rec00}\\
  \hline\multicolumn{4}{c}{}\\
  \multicolumn{2}{c}{}&\multicolumn{2}{l}{$t_{00}(c)=t_{00}(c-1)+t_{11}(c-1)+t_{12}(c-1)+t_{21}(c-1)+t_{22}(c-1)$}\\ \multicolumn{4}{c}{}\\\hline\hline
  $S_{11}$ & \ig{s11} & \ig{ext11} & \ig{rec11}\\
  \hline\multicolumn{4}{c}{}\\
  \multicolumn{2}{c}{}&\multicolumn{2}{l}{$t_{11}(c)=t_{00}(c-1)$}\\ \multicolumn{4}{c}{}\\\hline\hline
  $S_{12}$ & \ig{s12} & \ig{ext12} & \ig{rec12}\\
  \hline\multicolumn{4}{c}{}\\
  \multicolumn{2}{c}{}&\multicolumn{2}{l}{$t_{12}(c)=t_{11}(c-1)+t_{21}(c-1)$}\\ \multicolumn{4}{c}{}\\\hline\hline
  $S_{21}$ & \ig{s21} & \ig{ext21} & \ig{rec21}\\
  \hline\multicolumn{4}{c}{}\\
  \multicolumn{2}{c}{}&\multicolumn{2}{l}{$t_{21}(c)=t_{11}(c-1)+t_{12}(c-1)$}\\ \multicolumn{4}{c}{}\\\hline\hline
  $S_{22}$ & \ig{s22} & \ig{ext22} & \ig{rec22}\\
  \hline\multicolumn{4}{c}{}\\
  \multicolumn{2}{c}{}&\multicolumn{2}{l}{$t_{22}(c)=t_{11}(c-1)$}
\end{tabular}
\end{center}

 By solving the system of recurrences defined by these five cases and
 \eref{r=2} we arrive at the stated result for $r=2$.
\end{proof}

The process outlined in the proof of Lemma \ref{r=2} can be
implemented in an algorithm. We determine the set $\mathcal{S}_r$ with
an exhaustive search.  Then, for each element $s_{v}\in
\mathcal{S}_r$, we list all extensions to a minimal $r\times 2$
tiling. Each extension of $s_{v}$ produces a unique sum-term in the
recurrence for $t_{v}(c)$.  Once again, the initial conditions are
\begin{align*}
  t_{v}(1)=\left\{
    \begin{array}{ll}
      1,~~ &\text{if} ~ v~\text{contains a}~2,\\
      0,~~&\text{otherwise}.
    \end{array}
  \right.
\end{align*}

We may reduce the number of equations in the system of recurrences by
ignoring elements of $\mathcal{S}_r$ which cannot be extended to a
minimal $r\times 2$ tiling.  This may be taken further by determining
necessary conditions for an element of $\mathcal S_r$ to be extendible
to an $r\times c$ tiling.

The algorithm produces a system of linear recurrences with constant
coefficients. This proves the following result.

\begin{theorem} For a fixed number of rows $r$, the ordinary
  generating function for the number of tilings of an $r\times n$
  rectangle is a rational generating function.
\end{theorem}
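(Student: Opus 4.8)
The approach is the transfer-matrix (finite-automaton) method, packaged exactly as in the minimal-tiling machinery already set up above for the cases $r=1,2$. The entire content of the theorem, once the algorithm is in hand, reduces to the standard fact that a sequence satisfying a linear recurrence with constant coefficients has a rational ordinary generating function; so the real work is to verify that the algorithm genuinely outputs such a system, i.e. that the minimal $r\times 1$ tilings form an \emph{adequate finite state space}.

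First I would fix $r$ and take the state set to be $\mathcal{S}_r$, which is finite since each $s_v$ is indexed by a ternary $r$-tuple, so $|\mathcal{S}_r|\le 3^r$. For $s_v,s_w\in\mathcal{S}_r$ define $A_{v,w}$ to be the number of minimal $r\times 2$ tatami tilings whose first column realizes $s_v$ and which, after the first column is removed, realize $s_w$. The key lemma to prove is the \emph{history-independence} claim: the number of ways to extend an arbitrary minimal $r\times k$ tiling with $k$-th column profile $s_v$ to a minimal $r\times(k+1)$ tiling with resulting profile $s_w$ equals $A_{v,w}$, for every $k$, irrespective of columns $1,\dots,k-1$. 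This holds because (i) the ternary tuple $s_v$ records, for each row, whether that square lies in a vertical dimer, a monomer, or the left half of a protruding horizontal dimer, which is exactly the data needed to know which squares of column $k+1$ are already covered and to reconstruct the pattern of tile-corners on the vertical grid line between columns $k$ and $k+1$; (ii) the tatami condition --- no four tile-corners at a lattice point --- is a purely local constraint on $2\times 2$ windows, so adding column $k+1$ can only violate it in windows straddling columns $k$ and $k+1$, whose validity is decided by $s_v$ and the new column alone; and (iii) the forced propagation of rays is nothing but the column-by-column transmission of this same local constraint, hence is already captured. Conversely, the first $k$ columns of any $r\times c$ tatami tiling form a \emph{unique} minimal $r\times k$ tiling, so these transitions give a bijection between $r\times c$ tilings beginning with $s_v$ and length-$(c-1)$ walks in the automaton started at $s_v$.

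Granting this, for $c\ge 2$ one has the finite system of constant-coefficient linear recurrences
\[
t_{r,v}(c)=\sum_{w\in\mathcal{S}_r}A_{v,w}\,t_{r,w}(c-1),
\]
with the stated initial data $t_{r,v}(1)\in\{0,1\}$, and $T(r,c)=\sum_{v\in\mathcal{S}_r}t_{r,v}(c)$ exactly as in \eref{r=2}. Collecting the $t_{r,v}$ into a vector $\mathbf{t}(c)$ gives $\mathbf{t}(c)=A\,\mathbf{t}(c-1)$ for $c\ge2$, whence $\sum_{c\ge1}\mathbf{t}(c)z^c=z\,(I-zA)^{-1}\mathbf{t}(1)$, a vector of rational functions with common denominator $\det(I-zA)$. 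Applying the coordinate-sum functional and adding the $c=0$ term $T(r,0)=1$ yields that $T_r(z)$ is rational; equivalently, $T(r,c)$ itself satisfies the scalar constant-coefficient recurrence whose characteristic polynomial is the reciprocal of $\det(I-zA)$. The explicit outputs for $r=1,2,3$ and the denominators for $1\le r\le 11$ are then obtained by carrying out this computation, e.g. in Maple. The main obstacle is the history-independence lemma above: pinning down precisely what information the profile must carry and arguing that the tatami property is verified incrementally over $2\times 2$ windows. Everything after that --- termination of the search producing $\mathcal{S}_r$, and the passage from a finite constant-coefficient linear system to a rational generating function --- is routine bookkeeping of the kind already displayed for $r=2$.
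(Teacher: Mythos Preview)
Your proposal is correct and follows essentially the same approach as the paper: both use the minimal $r\times 1$ tilings $\mathcal{S}_r$ as a finite state space, derive a system of constant-coefficient linear recurrences from the column-to-column extensions, and conclude rationality from that. Your version is more explicit than the paper's---you spell out the history-independence lemma and the locality of the tatami constraint to $2\times 2$ windows, and you package the recurrences via the transfer matrix $(I-zA)^{-1}$---whereas the paper simply asserts, after exhibiting the algorithm on $r=2$, that ``the algorithm produces a system of linear recurrences with constant coefficients,'' and leaves the rest to the reader.
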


The output of our algorithm for $r=3$ gives the following generating function:

$$T_3(z)=\frac{1+2z+8z^2+3z^3-6z^4-3z^5-4z^6+2z^7+z^8}{1-z-2z^2-2z^4+z^5+z^6}.$$

It is impractical to include the complete generating function for
any larger values of r. The degrees for the numerators and
denominators, however, as well as the coefficients in the denominators are
given in \tabref{Genfuntable} for $r=1,2,...,11$. The salient patterns
in these coefficients are summarized in Conjectures
\ref{conj:selfreciprocal} and \ref{conj:degdenoms}.  Note
that \conref{selfreciprocal}$(a)$ implies $g(z)$ is a self-reciprocal
polynomial for $r\equiv 2 \pmod{4}$.

\begin{conjecture} \label{conj:selfreciprocal} Let
  $T_r(z)=\frac{f(z)}{g(z)}$, where $f(z)$ and $g(z)$ are relatively
  prime polynomials, and $\deg (g(z))=n$, and $r\geq 1$.
  Then,
  \begin{align*}
    g(z)=\left\{\begin{aligned}
        -z^ng&\left(\,\,\,\, \frac{1}{z}\right),& \text{if}~ r\equiv 0 \pmod{4},\\
        -z^ng&\left(- \frac{1}{z}\right),& \text{if}~ r\equiv 1 \pmod{4},\\
        z^ng&\left(\,\,\,\, \frac{1}{z}\right),& \text{if}~ r\equiv 2 \pmod{4},\\
        z^ng&\left(- \frac{1}{z}\right),& \text{if}~ r\equiv 3
        \pmod{4}.
      \end{aligned}\right.
  \end{align*}
\end{conjecture}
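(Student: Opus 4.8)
The plan is to exhibit the claimed symmetry of $g(z)$ as the shadow of a self-duality of the transfer matrix that the algorithm of this section produces. Let $M=M_r$ be the square nonnegative-integer matrix indexed by the extendable configurations of $\mathcal S_r$, whose $(s_u,s_v)$ entry is the number of one-column extensions of the minimal $r\times1$ tiling $s_v$ that, after deleting the first column, yield $s_u$; this is exactly the matrix encoding the system of linear recurrences built in the proof of \lref{}[Lemma~\ref{r=2}]. Then $T_r(z)=\mathbf a^{\top}(I-zM)^{-1}\mathbf b$ for appropriate vectors $\mathbf a,\mathbf b$, so that $g(z)$ divides $G(z):=\det(I-zM)$ after clearing the factor common to $f$ and $g$; note $G(0)=1=g(0)$, matching the normalization in the examples. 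Writing $\mu_1,\dots,\mu_N$ for the eigenvalues of $M$, the roots of $G$ are the reciprocals $1/\mu_i$, and the identity $g(z)=\epsilon\,z^{n}g(\sigma/z)$ is equivalent to invariance of the eigenvalue multiset of $M$ under $\mu\mapsto\sigma/\mu$, where one expects $\sigma=(-1)^{r}$ and $\epsilon=-(-1)^{\lfloor r/2\rfloor}$.

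I would reduce this eigenvalue symmetry to a \emph{similarity} $M\sim\sigma M^{-1}$. Granting such a similarity, the determinant computation is routine: factoring $I-z\sigma M^{-1}=M^{-1}(M-z\sigma I)$ and using $\sigma^{2}=1$ gives
\[
G(z)=\det(I-zM)=\det\!\bigl(I-z\sigma M^{-1}\bigr)=(\det M)^{-1}(-1)^{N}\sigma^{N}\,z^{N}\,G(\sigma/z).
\]
Because the eigenvalue multiset is reciprocal-closed, $\prod\mu_i=\prod(\sigma/\mu_i)$ forces $\det M=\pm1$, so the scalar $(\det M)^{-1}(-1)^{N}\sigma^{N}$ equals $\pm1$; this is the candidate for $\epsilon$ and $N=\deg G$ the candidate for $n$. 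The remaining bookkeeping is to evaluate $\det M$ and the parity of $N$ as functions of $r$ and to verify that they reproduce the tabulated signs and the stated dependence on $r\bmod4$, anchored by the already-computed cases $r=1,2,3$.

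The crux is establishing $M\sim\sigma M^{-1}$. I would search for a signed permutation $S=DP$, with $P$ the permutation matrix of an explicit involution $\iota$ on the extendable states of $\mathcal S_r$ and $D$ a diagonal matrix of signs, satisfying $SMS^{-1}=\sigma M^{-1}$. The natural candidate for $\iota$ is the vertical reflection of a column (reversal of the ternary tuple $v$) together with reversal of the left-to-right scanning direction, while $D$ assigns to each state a sign determined by the parity of its cells so as to convert the reflection into the twist by $\sigma=(-1)^{r}$; the period-four behaviour should then fall out of the interaction between reflecting the $r$ cells of a column and this sign, governed by $\binom{r}{2}\bmod2$. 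Verifying $SMS^{-1}=\sigma M^{-1}$ amounts to a bijection between one-column extensions and their time-reversed deletions, and here the structural results of Section~2 are the natural tools: the compatibility conditions of \lref{conditions} control which adjacencies of running bond survive under reflection, and the recoverability of a tiling from its border (Lemma~\ref{lem:conditionbenefits}) is what lets a deletion be read as an extension of the reflected state.

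I expect two obstacles, the first being the main one. Proving that $M$ is genuinely similar to $\sigma M^{-1}$ requires showing that $\iota$ is a well-defined involution that restricts to the extendable states and that the signed conjugation holds entry by entry; this is where the $r\bmod4$ dependence is most delicate and where a naive reflection may fail to be an involution or may leave the extendable subspace. The second obstacle is the descent from $G$ to the reduced $g$: the cancelled factor $h=G/g$ must itself be reciprocal-symmetric so that the quotient inherits the symmetry with the corrected degree $n=\deg g$ and the stated sign $\epsilon$. I would handle this by proving that the roots common to $f$ and $g$ also form a multiset closed under $z\mapsto\sigma/z$, so that $h$ carries the same type of symmetry (with $h(0)=1$) and the division is clean.
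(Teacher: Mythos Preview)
The statement is presented in the paper as a \emph{conjecture}; the paper supplies no proof, only the numerical evidence assembled in Table~\ref{tab:Genfuntable} for $r\le 11$. There is therefore no argument in the paper against which to measure your attempt, and what you have written is explicitly a programme rather than a proof.

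The step you yourself flag as the crux is a genuine gap, and the mechanism you propose for it does not do what you need. ``Reversal of the left-to-right scanning direction'' is time-reversal of the transfer process: if a one-column extension carries state $s_v$ to state $s_u$, then reading the same two-column picture from right to left carries (a relabelled) $s_u$ back to (a relabelled) $s_v$. On the level of matrices this is a conjugacy $PMP^{-1}=M^{\top}$ for a permutation $P$, not a conjugacy to $M^{-1}$; and since every square matrix is similar to its transpose, the observation imposes no constraint whatsoever on the spectrum of $M$ and in particular cannot force the reciprocal symmetry $\mu\leftrightarrow\sigma/\mu$ that your argument requires. (Concretely, for $r=2$ the first row of $M$ is all ones, so $M$ is far from orthogonal and $M^{\top}\ne M^{-1}$, yet $G(z)=(1+z)\,g(z)$ is indeed self-reciprocal---so the phenomenon is real but your explanation of it is not.) The second ingredient you name, vertical reflection of the column tuple, is a symmetry that conjugates $M$ to itself and therefore cannot supply the missing inversion. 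Your appeal to Lemmas~\ref{lem:conditions} and~\ref{lem:conditionbenefits} is also not on point: those results describe the global ray--feature decomposition of a completed tiling and the recoverability of a tiling from its border, and say nothing about invertibility of the column-to-column map. Finally, even were the symmetry of $G(z)=\det(I-zM)$ in hand, your second obstacle---that the factor cancelled in passing from $G$ to the reduced $g$ is itself reciprocal of the correct type---is left as a hope; for $r=2$ the cancelled factor $1+z$ happens to be self-reciprocal, but you give no reason this persists.
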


A mod $4$ pattern also seems to occur in the degrees of the
denominators of $T_r(z)$.  The rigid structure we encounter in tatami
tilings prompts us to infer this pattern upon all values as well.

\begin{conjecture}
  \label{conj:degdenoms}
  Let $g(z)$ be the denominator of $T_r(z)$.  Then,
  \begin{align*}
    \deg( g(z) ) = \left\{
      \begin{aligned}
        8m^2+2m+1, && \text{if}~ r\equiv 0 \pmod{4},\\
        8m^2+4m+2, && \text{if}~ r\equiv 1 \pmod{4},\\
        8m^2+10m+4, && \text{if}~ r\equiv 2 \pmod{4},\\
        8m^2+8m+6, && \text{if}~ r\equiv 3 \pmod{4}.
      \end{aligned}\right.
  \end{align*}
\end{conjecture}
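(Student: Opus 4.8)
The plan is to pin down the exact degree of the denominator $g(z)$ by identifying it with the order of the minimal linear recurrence satisfied by the sequence $(T(r,c))_{c\ge 0}$, and then counting a distinguished family of states in the minimal-tiling automaton of \sref{fixedheight}. Throughout I write $m=\lfloor r/4\rfloor$, so that the four cases of the conjecture are exactly $r=4m,\,4m+1,\,4m+2,\,4m+3$, consistent with the tabulated degrees $2,4,6$ for $r=1,2,3$ (where $m=0$) and $11$ for $r=4$ (where $m=1$).

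First I would make the transfer structure explicit. The system of recurrences on the quantities $t_{r,v}(c)$ is governed by a transition matrix $M_r$ on the set $\mathcal{S}_r$ of minimal $r\times 1$ tilings, where $(M_r)_{u,v}=1$ precisely when $s_v$ is obtained from a minimal $r\times 2$ extension of $s_u$ by deleting the first column. Then $T(r,c)=\mathbf{1}^{\top}M_r^{\,c-1}e$ for a suitable start vector $e$, so $T_r(z)$ is rational with denominator dividing $\det(I-zM_r)$; this gives the Theorem and the crude upper bound $\deg g\le|\mathcal{S}_r|$. The difficulty is that $|\mathcal{S}_r|$ grows like $\phi^{\,r}$ (ternary column-codes with $0$s occurring in consecutive pairs), whereas the claimed degree is only $8m^2+O(m)\sim r^2/2$, so the substance of the proof is to explain the collapse from exponential to $\Theta(r^2)$.

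The heart of the argument is to replace $\det(I-zM_r)$ by the \emph{true} denominator, which is the characteristic polynomial of $M_r$ restricted to the cyclic module generated by $e$ and $\mathbf{1}$; equivalently $\deg g$ is the linear complexity of $(T(r,c))$, i.e.\ the rank of its Hankel operator. I would show that only the bi-reachable (recurrent) states contribute and classify them with the \T-diagram machinery. By \lref{conditions}, between any two adjacent rays lies a single running-bond region whose orientation and parity are forced, so for fixed $r$ and $c\to\infty$ a tiling is a bounded band of features carried along an otherwise periodic running-bond background. I expect the recurrent column-states to be parametrized by (i) the orientation of the background bond and (ii) the set of vertical positions at which rays cross the current column, subject to the non-crossing and parity constraints (H1), (H2), (V1), (V2). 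Counting these admissible ray-crossing patterns should give a count quadratic in $r$ of leading order $r^2/2=8m^2$; the even/odd split between horizontal and vertical background bond (already visible in \tref{tnnn}, where odd $n$ forces a vertical bond and even $n$ a horizontal one) together with the two odd-distance conditions in (H2) and (V2) should produce the finer mod $4$ refinement and the constant offsets $1,2,4,6$.

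The main obstacle is the exact degree, and it has two halves. For the upper bound I must prove genuine cancellation: that every non-recurrent state, and every recurrent state outside the parametrized family, drops out of the reduced denominator, i.e.\ that the minimal polynomial of $M_r$ on the relevant module has exactly the claimed degree. For the lower bound I must show that these poles really survive, for instance by exhibiting $\deg g$ consecutive values of $T(r,c)$ whose Hankel determinant is nonzero, or by isolating $\deg g$ distinct eigenvalues of $M_r$ that appear with nonzero coefficient in $\mathbf{1}^{\top}M_r^{\,c-1}e$. Controlling this non-vanishing uniformly in $r$ is the delicate point, and is presumably why the statement is recorded as a conjecture rather than a theorem; a careful, parity-aware enumeration of the recurrent \T-diagram states, cross-checked against the denominators tabulated for $1\le r\le 11$, is where I would concentrate the effort.
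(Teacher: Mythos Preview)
The paper does not prove this statement: it is recorded as a conjecture, supported only by the computed denominators for $1\le r\le 11$ in \tabref{Genfuntable}. There is therefore no proof in the paper to compare your proposal against, and you yourself acknowledge as much in your final paragraph.

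Assessed on its own, your outline is a reasonable strategy but not a proof. Two genuine gaps remain. First, the key assertion that the recurrent (bi-reachable) column-states are parametrized by background-bond orientation together with admissible ray-crossing data, and that counting these gives exactly $8m^2+O(m)$ with the specific constants $1,2,4,6$ according to $r\bmod 4$, is stated as an expectation rather than carried out; the \T-diagram constraints of \lref{conditions} are the right tool, but you have not actually performed the enumeration or shown that no further collapse occurs. Second, as you correctly identify, the lower bound --- showing that none of these putative poles cancel against the numerator --- is entirely open in your write-up, and this is precisely the obstruction that leaves the statement a conjecture. Your sketch is a plausible plan of attack, but until both the exact state count and the non-vanishing are established it does not advance the status of the conjecture beyond what the paper already records.
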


\begin{table}[htbp]
  \centering \scalebox{0.85}{%
    \begin{tabular}{|c|c|c|l|}\hline
      $r$ & $p$ & $q$ &
      \begin{minipage}[h]{.85\linewidth}
        \vspace{0.1cm} Coefficients of $g(z)$ ordered from left to
        right by ascending degree and then folded like these arrows;
        \includegraphics[scale=1.0]{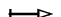} for $r\le
        3$, \includegraphics[scale=1.0]{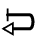} for
        $r=4,5,6,7$, and \includegraphics[scale=1.0]{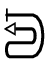} for  $r\ge 8$.
        \vspace{0.1cm}
      \end{minipage} \\ \hline

      $1$&1&2&\texttt{~1,-1,1}\\	\hline
      $2$&3&4&\texttt{~1, -2, 0, -2, 1}\\ \hline
      $3$&$8$&$6$&      \texttt{~1, -1, -2, 0, -2, 1, 1}   \\ \hline
      $4$&$14$&$11$&
      \begin{minipage}{0.85\linewidth}
        \vspace{0.1cm}

        \texttt{-1,~1,~1,~1,-1,~7}

        \texttt{~1,-1,-1,-1,~1,-7}

        \vspace{0.1cm}
      \end{minipage}  \\
      \hline
      $5$&$18$&$14$&
      \begin{minipage}{0.85\linewidth}
        \vspace{0.1cm}

        \texttt{-1,~1,~1,-1,~3,-1,~5,-2}

        \texttt{~1,~1,-1,-1,-3,-1,-5}

        \vspace{0.1cm}
      \end{minipage}  \\
      \hline
      $6$&$27$&$22$&
      \begin{minipage}{0.85\linewidth}
        \vspace{0.1cm}

        \texttt{~1,-1,-1,~1,-1,-2,~2,-10,~9,-1,~4, 6}

        \texttt{~1,-1,-1,~1,-1,-2,~2,-10,~9,-1,~4}

        \vspace{0.1cm}
      \end{minipage}  \\
      \hline
      $7$&$28$&$22$&
      \begin{minipage}{0.85\linewidth}
        \vspace{0.1cm}

        \texttt{~1,-1,-3,~3,~4,-4,-9,~7,~6,-5,~2, 0}

        \texttt{~1,~1,-3,-3,~4,~4,-9,-7,~6,~5,~2}

        \vspace{0.1cm}
      \end{minipage}  \\
      \hline
      $8$&$44$&$37$&
      \begin{minipage}{0.85\linewidth}
        \vspace{0.1cm}

        \texttt{-1,~1,~1,-1,~1,-1,~1,~3,-3,~13,-12}

        \texttt{~1,-1,-1,~1,-1,~1,-1,-3,~3,-13,~12}

        \texttt{~34,-2,~6,~20,-6,~12, 0, 0}

        \texttt{-34,~2,-6,-20,~6,-12, 0, 0}

        \vspace{0.1cm}
      \end{minipage}  \\
      \hline
      $9$&$50$&$42$&
      \begin{minipage}{0.85\linewidth}
        \vspace{0.1cm}

          \texttt{-1,~1,~1,-1,~1,-1,~1,-1,~5,-3,~11,-8}

          \texttt{~1,~1,-1,-1,-1,-1,-1,-1,-5,-3,-11,-8}

          \texttt{10,~24,~2,~28,~2,~20,~8,~14,~4,~6}

          \texttt{\hspace{19.5pt}-24,~2,-28,~2,-20,~8,-14,~4,-6}

        \vspace{0.1cm}
      \end{minipage}  \\
      \hline
      $10$&$65$&$56$&
      \begin{minipage}{0.85\linewidth}
        \vspace{0.1cm}
        \footnotesize{

          \texttt{~1,-1,-1,~1,-1,~1,-1,~1,-1,-4,~4,-16,~15,~1,-1}

          \texttt{~1,-1,-1,~1,-1,~1,-1,~1,-1,-4,~4,-16,~15,~1,-1}

          \texttt{-120,~68,-78,-18,~18,-66,~66,-2,~7,~41,-23,~33,-17,~17}

          \texttt{\hspace{32.5pt}68,-78,-18,~18,-66,~66,-2,~7,~41,-23,~33,-17,~17}

        }
\vspace{0.1cm}
      \end{minipage}  \\
      \hline
      $11$&$64$&$54$&
      \begin{minipage}{0.85\linewidth}
        \vspace{0.1cm}
        \footnotesize{

          \texttt{~1,-1,-5,~5,~13,-13,-27,~27,~48,-48,-83,~81,~125,-120,-160}

          \texttt{~1,~1,-5,-5,~13,~13,-27,-27,~48,~48,-83,-81,~125,~120,-160}

          \texttt{-34,~83,~89,-156,-165,~199,~210,-202,-206,~185,~193,-154}

          \texttt{-34,-83,~89,~156,-165,-199,~210,~202,-206,-185,~193,~154}

          \vspace{0.1cm}
        }
      \end{minipage}  \\
      \hline
    \end{tabular}}
  \caption{Summary of generating function attributes for fixed height
    tilings, $r=1,...,11$, and where $p$ and $q$ are the degrees of
    the numerator and denominator of $T_r(z)$, respectively. The
    degree ordering shows the patterns of \conref{selfreciprocal}.
  }
  \label{tab:Genfuntable}
\end{table}

\section{More conjectures and further research}


The \T-diagram structure removes much of the mystery from tatami tilings and
motivates considerable future work.  In this section we list some open
problems and conjectures, beginning with another counting problem on
rectangular grids.

\subsection{Rectangular regions}
\begin{conjecture}
For all $d \ge 0$ and $m \ge 1$ there is an $n_0$ such
that, for all $n \ge n_0$.
\begin{align*}
T(n,n+d,m) = T(n_0,n_0+d,m),
\end{align*}
whenever $n(n+d)$ has the same parity as $m$ (otherwise $T(n,n+d,m)
= 0$, by \tref{maxmon}).
\end{conjecture}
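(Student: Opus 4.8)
The plan is to combine the \T-diagram structure with a finiteness argument. For fixed $d$ and $m$ the bound $m\le\max(r+1,c+1)$ of \tref{maxmon} is not binding once $n$ is large, so the real content is that the surviving tilings all come from a bounded family. Concretely, I would show that every tatami tiling of an $n\times(n+d)$ grid with exactly $m$ monomers is a bounded-size ``combinatorial skeleton'' padded out by running bond, that there are only finitely many such skeletons (a number depending on $d$ and $m$ but not on $n$), and that for each skeleton the number of ways to pad it inside an $n\times(n+d)$ box is the same for all large $n$ of the admissible parity. Summing over skeletons then gives that $T(n,n+d,m)$ is eventually constant.

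Step 1 (bounding the skeleton) is the crux. Each vee, loner, or vortex contains a monomer, and a jagged segment of length $\ell$ contains at least $\ell/2$ monomers, so a tiling with $m$ monomers has at most $m$ vees, loners, and vortices together and total jagged length at most $2m$. The difficulty is that \emph{bidimers} carry no monomer, so their number must be bounded uniformly in $n$ by a separate, geometric argument. A bidimer emits four $45^\circ$ staircase rays which must reach the boundary without crossing any other ray, and in a near-square grid with $n$ large these rays are long: a single central bidimer already cuts the grid into four essentially triangular corner regions, and a bidimer placed inside one such triangle has two of its rays driven back toward the triangle's slanted sides, with only $O(1)$ escape routes before a ray crossing or an untileable region (contradicting \lref{conditions}) is forced. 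Turning this into a recursion should give a bound $K=K(d)$ on the number of bidimers, hence $O(K(d,m))$ sources, $O(K(d,m))$ rays, and $O(m)$ total jagged length.

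Step 2 (finitely many types). To a tiling associate the multiset of its source types, the adjacency graph of its rays, the orientation of the running bond in each region together with the parity class of each inter-ray gap forced by (H2) and (V2), and the cyclic order around the boundary of ray-ends, jagged segments, and smooth segments. By Step 1 this is a bounded amount of data over a finite alphabet, so there are finitely many types $\tau_1,\dots,\tau_N$ with $N=N(d,m)$ independent of $n$. Every tiling is then a type together with an assignment of positive integer lengths to its internal running-bond bands and boundary smooth segments, subject to a fixed linear system whose only $n$-dependence is its two right-hand sides: the vertical bands crossed by a horizontal transversal sum to $n+d$, the horizontal bands crossed by a vertical transversal sum to $n$, and each length has the parity dictated by \lref{conditions} and the fixed jagged lengths. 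Hence $T(n,n+d,m)=\sum_i N_i(n)$, with $N_i(n)$ the number of admissible length-assignments for $\tau_i$.

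Step 3 (each $N_i(n)$ is eventually constant) finishes the argument. Here I would use the rigidity of near-square tatami tilings -- the same rigidity that forces a lone central bidimer into an essentially unique position -- to argue that within a fixed type the region-compatibility and parity conditions pin all the horizontal-extent band lengths to one another up to fixed offsets, and likewise the vertical-extent ones, so that the two sum constraints leave at most a bounded number of solutions, a number that stabilizes for all sufficiently large admissible $n$. Summing the finitely many eventually-constant contributions yields the conjecture, with $n_0$ the maximum of the per-type thresholds, and in principle pins down the limiting value. The main obstacle is Step 1: a uniform bound on the number of bidimers, with control of their ray crossings, is precisely the structural fact that is currently missing, which is presumably why the statement remains a conjecture. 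A secondary difficulty is making the Step 3 rigidity sharp enough to reach ``eventually constant'' rather than merely ``eventually quasi-polynomial''; the identity $T(n,n,n)=n2^{n-1}$ of \tref{tnnn}, where the growth along the diagonal comes only from $m=n$ growing, suggests that holding $m$ fixed is exactly what removes the extra degree of freedom.
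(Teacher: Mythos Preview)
The paper does not prove this statement: it is presented as a \emph{conjecture} in the ``More conjectures and further research'' section, with no proof or proof sketch given. There is therefore nothing in the paper to compare your proposal against.

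As for the proposal itself, you have correctly identified the main obstruction and are candid about it: bidimers carry no monomer, so bounding $m$ gives no direct bound on their number, and your geometric argument for a uniform bound $K(d)$ on bidimers is at present only a heuristic (``turning this into a recursion should give\ldots''). This is indeed the crux, and the paper offers no help here. Your Step~3 is also soft: the claim that the parity and adjacency constraints of \lref{conditions} pin all band lengths ``up to fixed offsets'' so that each $N_i(n)$ is eventually \emph{constant} (rather than, say, eventually linear or quasi-polynomial in $n$) is asserted but not argued, and this is exactly where a purported proof could silently fail. The paper's only further remark is the experimental guess that the smallest $n_0$ is $m+d+4$ for $d\ge 1$, and the announcement that $T(n,n,m)=m2^m+(m+1)2^{m+1}$ for $m<n$ of the right parity will appear elsewhere; neither of these supplies the missing structural lemma. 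In short, your outline is a plausible strategy, but it is a strategy and not a proof, and the paper is in the same position.
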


Experimentally, it appears that the smallest $n_0$ is $m+d+4$, if
$d\ge 1$.

The easiest case occurs when $d = 0$ and $m = 1$.  It is
not hard to show that for all odd $n \ge 3$ we have $T(n,n,1) = 10$
(the single monomer must go at a corner or in the center).
  
In a subsequent paper we will show that for $m<n$,
\begin{align*}
  T(n,n,m) = m2^m +(m+1)2^{m+1},
\end{align*}
whenever $m$ and $n$ have the same parity.

Returning to the subject of generating functions, ignoring signs, it
appears that the denominators of $T_r(z)$ in \sref{fixedheight} are
self-reciprocal.  There must be a combinatorial explanation for this.
Similar questions in the non-tatami case are considered in
\cite{ABBP}.

Generating functions also appear in \conref{cyclotomic}, inspired by
conversations with Knuth.  Let $T(n,z)$ be the generating polynomial
for the number of $n\times n$ tilings with $n$ monomers and $i$
vertical dimers.  Once again, to count such tilings we consider
flipping diagonals, with the added precaution that the sum of the
number of tiles in the flipped diagonals is a given constant.  The
relationship between this and subsets of $\{1, \ldots, n\}$ which have
a given sum is detailed in a subsequent publication.

Let $\phi_n(z)$ denote the $n$th cyclotomic polynomial.  Recall that
the roots of $\phi_n(z)$ are the primitive roots of unity.  One of
their more well-known properties is that
\begin{equation}
1-z^n = \prod_{d \mid n} \phi_d(z).
\end{equation}

Let $S_n(z)$ denote the ordinary generating function of subsets of
$\{1, \ldots, n\}$ which have a given sum.  That is, $\langle z^k
\rangle S_n(z)$ is the number of subsets $A$ of \\
$\{1,2,\ldots,n\}$
such that the sum of the numbers in $A$ is $k$.  It is not difficult
to see that
\begin{align}
S_n(z) = (1+z)(1+z^2) \cdots (1+z^n) = \prod_{k=1}^n (1+z^k).
\label{eq:Snz}
\end{align}

Let $\nu(n)$ denote the number of 2s in the prime factorization of $n$
and note that
\begin{align*}
  1+z^n = \frac{1-z^{2n}}{1-z^n} = \frac{\prod_{d \mid 2n}
    \phi_d(z)}{\prod_{d \mid n} \phi_d(z)} = \prod_{\substack{d \mid 2n \\ d
    \nmid n}} \phi_d(z) = \prod_{\substack{d \mid n \\ d \text{ odd}}}
  \phi_{2^{1+\nu(n)}d}(z).
\end{align*}

When this latter expression is used in $S_n(z)$ some interesting
simplification occurs.
\begin{lemma}
For all $n \ge 1$,
\begin{align*}
S_n(z) = \prod_{j=1}^n \left( \phi_{2j} (z)\right)^{\lfloor \frac{n+j}{2j} \rfloor}  
\end{align*}

\label{lem:nj2}
\end{lemma}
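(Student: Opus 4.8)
The plan is to expand $S_n(z)$ using the identity $S_n(z)=\prod_{k=1}^n(1+z^k)$ from \eref{Snz}, replace each factor $1+z^k$ by its decomposition into cyclotomic polynomials given in the displayed identity immediately preceding the lemma, and then collect, for each $m$, the total exponent with which $\phi_m(z)$ occurs in the resulting product. I expect it to turn out that only cyclotomic polynomials $\phi_m(z)$ with $m$ even and $2\le m\le 2n$ appear, so after writing $m=2j$ the claim reduces to verifying that the exponent of $\phi_{2j}(z)$ equals $\lfloor (n+j)/(2j)\rfloor$.

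First I would fix an even integer $m$ and write $m=2^e f$ with $f$ odd and $e\ge 1$. Since $2^{1+\nu(k)}d$ is already in the form (power of two)$\,\cdot\,$(odd) whenever $d$ is odd, and distinct indices give distinct $\phi$'s, the factor $\phi_m(z)$ occurs in the decomposition of $1+z^k$ exactly once if $1+\nu(k)=e$ and $f\mid k$, and not at all otherwise. Writing $k=2^{e-1}b$ with $b$ odd (forced by $\nu(k)=e-1$), the condition $f\mid k$ becomes $f\mid b$ because $f$ is odd; hence $b=ft$ with $t=b/f$ odd, and $k=2^{e-1}ft=(m/2)\,t$ with $t$ a positive odd integer. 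So the exponent of $\phi_m(z)$ in $S_n(z)$ is precisely the number of positive odd integers $t$ with $(m/2)t\le n$.

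The next step is an elementary count: the number of positive odd integers $t$ with $t\le X$ is $\lfloor (X+1)/2\rfloor$, so taking $X=2n/m$ gives exponent $\lfloor (2n/m+1)/2\rfloor=\lfloor (n+m/2)/m\rfloor$. Substituting $m=2j$ this is exactly $\lfloor (n+j)/(2j)\rfloor$, matching the exponent on the right-hand side. Finally I would check that the ranges agree: any $\phi_m(z)$ that appears has $m$ even (every index produced by the decomposition of $1+z^k$ is even) with $m/2\le k\le n$ (take $t=1$), so $m=2j$ with $1\le j\le n$; conversely, for every $j\in\{1,\dots,n\}$ we have $n+j\ge 2j$, so $\lfloor (n+j)/(2j)\rfloor\ge 1$ and $\phi_{2j}(z)$ genuinely occurs. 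Comparing exponents factor by factor then gives the lemma.

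I expect the only delicate point to be the bookkeeping with the $2$-adic valuation $\nu$: confirming that $\phi_m(z)$ arises from $1+z^k$ for a unique ``type'' of $k$, and that the count of odd $t$ collapses to exactly $\lfloor (n+j)/(2j)\rfloor$ rather than an off-by-one variant. Everything else is routine. (An alternative route avoiding $\nu$ altogether is to double count pairs $(k,t)$ using $1-z^{2k}=(1-z^k)(1+z^k)$ and $1-z^N=\prod_{d\mid N}\phi_d(z)$ directly, but passing through the displayed identity seems cleanest.)
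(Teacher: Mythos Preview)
Your proposal is correct and follows essentially the same route as the paper's proof: both use the displayed identity $1+z^k=\prod_{d\mid k,\ d\text{ odd}}\phi_{2^{1+\nu(k)}d}(z)$, observe that $\phi_{2j}(z)$ arises from $1+z^k$ precisely when $k\in\{j,3j,5j,\ldots\}$, and then count these odd multiples of $j$ not exceeding $n$ to obtain $\lfloor (n+j)/(2j)\rfloor$. Your write-up is more explicit about the $2$-adic bookkeeping and the range check $1\le j\le n$, but the argument is the same.
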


\begin{proof}
  The index $2j$ will occur for those $k$s in \eref{Snz} for which $j
  = 2^{\nu(k)}d$ for some odd $d$ where $d \mid k$.  This equation is
  satisfied for $k = j,3j,5j,\ldots$.  There are $\lfloor (n+j)/(2j)
  \rfloor$ such $k$s that are less than or equal to $n$ (See
  \fref{floor}).
\end{proof}

\begin{figure}[ht]
\centering
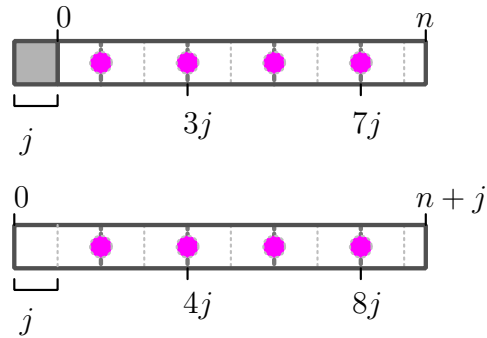
\caption{A visual aid for the last line of the proof of \lref{nj2}.
  The pink dots represent the sequence, $j,3j,5j, \ldots$, with $ij\le
  n$.  Adding $j$ to $n$ shows that the number of dots is $\lfloor
  (n+j)/(2j) \rfloor$. }
\label{fig:floor}
\end{figure}

\begin{conjecture}
\label{conj:cyclotomic}
  The generating polynomial $T(n,z)$ has the factorization
  \begin{align*}
    T(n,z) = P(n,z)\prod_{j\ge 1} S_{\left\lfloor \frac{n-1}{2^j}
      \right\rfloor}(z)
  \end{align*}
  where $P(n,z)$ is an irreducible polynomial.
\end{conjecture}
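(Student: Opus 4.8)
\textbf{Proof proposal for \conref{cyclotomic}.}

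The plan is to refine the diagonal-flip analysis of \tref{tnnn} so that it keeps track of the number of vertical dimers, and then to recognise the resulting generating function. First I would work in the canonical case (monomers in the top two corners) and, for each free monomer $\mu$, record two things: the fixed integer $a(\mu)$ by which flipping the diagonal anchored at $\mu$ changes the number of vertical dimers --- a flip toggles a staircase-shaped diagonal between the horizontal and vertical running-bond orientations, so $a(\mu)$ is determined by the length of that diagonal, i.e. by the row or column of $\mu$ --- and the way the compatibility rules of \lref{conditions} couple the flips of $\mu$ with those of the other free monomers. By \lref{flips} and \cref{adjcorn} a canonical tiling is exactly an admissible assignment of flip-or-not (and direction) to the free monomers, so the canonical part $C(n,z)$ of the count becomes a sum of monomials $z^{\sum \pm a(\mu)}$ over admissible assignments; the full generating polynomial is then $T(n,z)=2C(n,z)+2z^{(n^2-n)/2}C(n,1/z)$, since of the four rotations of a canonical tiling two preserve dimer orientation and the two quarter-turns transpose horizontal and vertical dimers.

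Second, I would carry out the $n\to n-2$ reduction of \tref{tnnn} at the level of this bivariate count. The compatibility rules in the canonical square are exactly those used inside the proof of \tref{tnnn}: a monomer and its mirror image on the opposite boundary interact only through the ``distance from the centerline'' inequality ($d_w+d_e>0$ forbidding two simultaneous downward flips, and symmetrically), and two monomers on the same boundary only through an easy staircase condition. The goal is to show that peeling off the outermost pair of free monomers factors out an independent generating function and leaves the $(n-2)\times(n-2)$ problem; iterating and sorting the free monomers by the dyadic scale at which they are peeled off is what should produce the product $\prod_{j\ge1}$, while within a single scale the ``independent binary choice of weight $k$'' pattern is precisely the subsets-by-sum generating function $S_{\lfloor(n-1)/2^j\rfloor}(z)=\prod_k(1+z^k)$. \lref{nj2} then repackages the extracted product in cyclotomic form, the convenient shape for cross-checking against the self-reciprocal denominator data of \sref{fixedheight}. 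I expect this step to contain most of the bookkeeping, and getting the dyadic indices $\lfloor(n-1)/2^j\rfloor$ to come out exactly right (including the parity split between even and odd $n$) to be the fiddly part.

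The last step, and the genuine obstacle, is the claim that the cofactor $P(n,z)=T(n,z)/\prod_{j\ge1}S_{\lfloor(n-1)/2^j\rfloor}(z)$ is an \emph{irreducible} polynomial. That it is a polynomial should follow from the decomposition above, since each factor $1+z^k$ records an honest independent binary choice that shifts the vertical count by $k$. Ruling out further factorization is much harder: the natural route is first to show $P(n,z)$ has no cyclotomic factors by evaluating it at roots of unity (all the ``expected'' cyclotomic factors having been absorbed into the $S$-product), and then to exclude non-cyclotomic factors via reduction modulo a carefully chosen prime or a Newton-polygon/degree argument. Since irreducibility of explicit integer polynomial families is notoriously delicate, I would expect this to resist a clean proof --- presumably the reason the statement is only a conjecture --- so a realistic intermediate target is to prove the factorization with $P(n,z)$ merely a polynomial, verify irreducibility computationally for small $n$, and pin down exactly what arithmetic input a complete proof would need.
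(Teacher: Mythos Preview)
The paper does not prove this statement: it is explicitly labelled a \emph{conjecture} and appears in the ``More conjectures and further research'' section, with the only supporting remark being that ``the relationship between this and subsets of $\{1,\ldots,n\}$ which have a given sum is detailed in a subsequent publication.'' There is therefore no proof in the paper to compare your proposal against.

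That said, your sketch is broadly consonant with the hints the paper gives. The paper already links $T(n,z)$ to the subset-sum generating function $S_n(z)$ via the diagonal-flip combinatorics of \tref{tnnn}, and your first two steps (tracking vertical-dimer counts through diagonal flips, then iterating the $n\to n-2$ reduction to peel off $S$-factors at dyadic scales) are a reasonable elaboration of that hint into a research programme. You are also right that the irreducibility of $P(n,z)$ is the genuine obstruction and is unlikely to fall to the structural arguments alone; your proposed intermediate target --- establishing the product decomposition with $P(n,z)$ merely a polynomial and verifying irreducibility computationally --- is the realistic deliverable. One caution: your formula $T(n,z)=2C(n,z)+2z^{(n^2-n)/2}C(n,1/z)$ for assembling the four rotations needs care, since a quarter-turn of a canonical tiling is not itself canonical and the vertical-dimer count transforms as $v\mapsto d-v$ with $d$ the total dimer count, so the bookkeeping of how the four rotational copies contribute should be checked against small cases before you lean on it.
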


  We return to the topic mentioned in the introduction: Tatami-tilings
  of orthogonal regions.

\subsection{Orthogonal regions}

\begin{figure}[ht]
  \centering
  \subfigure[]{
    \includegraphics[scale=0.45]{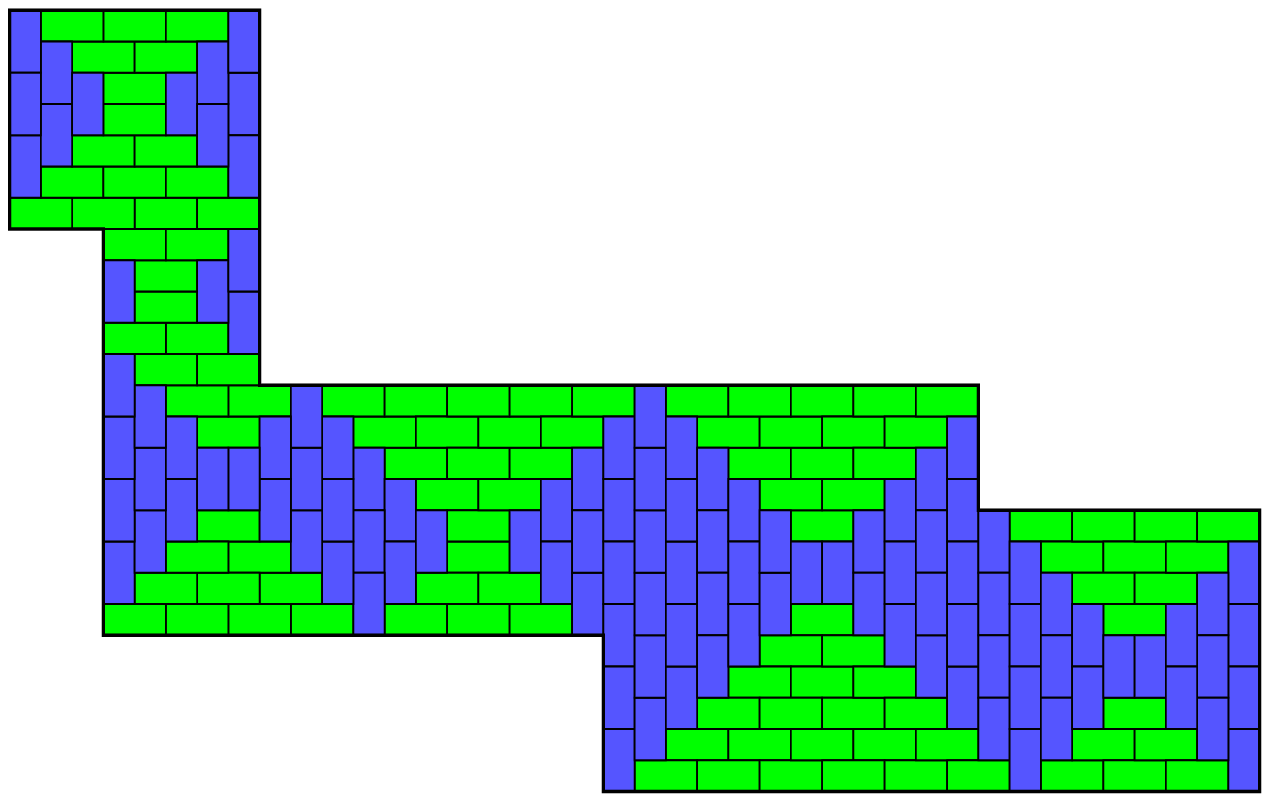}
\label{fig:solution:a}
  }
  \subfigure[]{
    \includegraphics[scale=0.45]{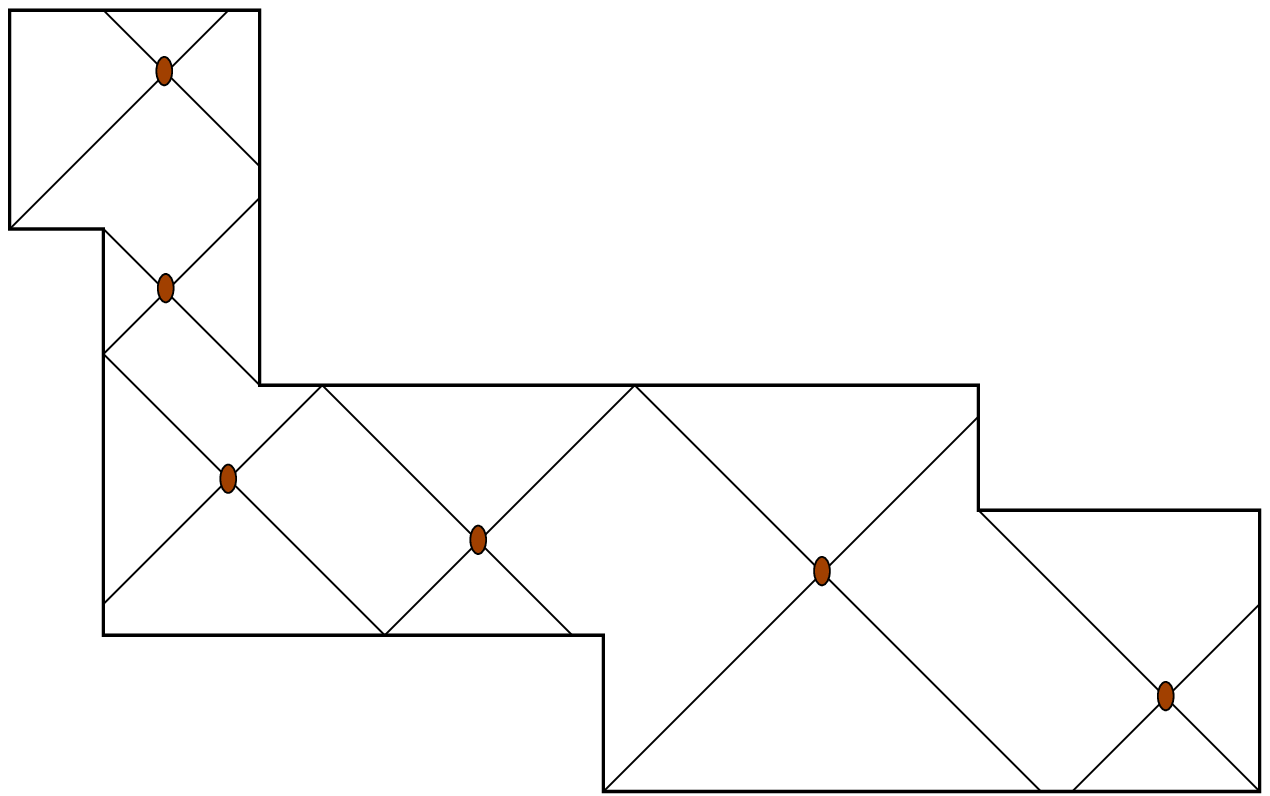}
    \label{fig:solution:b}
  }
  \caption{(a) The solution to the question posed in \fref{driveway}; no
    monomers are required to tatami tile the region.  (b) A legal
    configuration of six magnetic water striders in an orthogonal
    ``pond''.  Note that no further striders may be added.}
\label{fig:solution}
\end{figure}

We believe that the main structural components are the same as they
were for rectangles, but there are a few subtleties to be clarified at
inside corners, since a ray could begin at such a place.

What is the computational complexity of determining the least number
of monomers that can be used to tile an orthogonal region given the
segments that form the boundary of the region and the unit size of
each dimer/monomer?  In the rectangular grid this is answerable in
polynomial time using \T-diagrams, however, it appears to be NP-hard
for an arbitrary number of segments.

The problem of minimizing the number of monomers in a tiling inspires
what we call the ``magnetic water strider problem''.  This time the
orthogonal region is a pond populated by water striders.  A water
strider is an insect that rides atop water in ponds by using surface
tension.  Its 4 longest legs jut out at 45 degrees from its body.  In
the fancifully named magnetic water strider problem, we require the
body to be aligned north-south.  Furthermore its legs support it, not
by resting on the water, but by extending to the boundary of the pond.
Naturally, the legs of the striders are not allowed to intersect.  A
legal configuration of magnetic water striders in an orthogonal pond
is shown \fref{solution:b}.

There are two problems and a game here.  The first is a packing
problem: What is the largest number of magnetic waters striders that a
pond can support?  On the other hand, one can ask what is the minimum
number that can be placed so that no more can be added.  Placing and
packing striders can be tricky, which gives rise to an adversarial
game where players take turns placing striders in an orthogonal
region.  Brian Wyvill has kindly implemented a version of this game,
available at \url{http://www.theory.cs.uvic.ca/~cos/tatami/}.

Interpreted as a matching problem on a subgraph of a grid graph $G$, a
tatami tiling is a matching $M$ with the property that $G-M$ contains
no 4-cycles.  Note that there is always such a matching (e.g., take
the ``running bond'' layout on the infinite grid graph and then
restrict it to $G$).  However, if we insist on a perfect matching,
then the problem is equivalent to our ``perfect'' driveway paving
problem from the introduction.

More generally, a matching whose removal destroys $k$-cycles is called
\emph{$C_k$-transverse}.  Ross Churchley proved that finding a
$C_k$-transverse matching in an arbitrary graph is NP-hard when $k\ge 4$
(private communication \cite{churchley}).

\subsection{Combinatorial games}

Consider the following game.  Given an orthogonal region, players take
turns placing dimers (or dimers and monomers); each placement must
satisfy the tatami constraint and the last player who can move wins.
This game, called Oku!, is reminiscent of the game called Nimm, in
which players also win by making the last move, however a winning
strategy for our game is unknown and there are grid sizes in which the
second player can force a win.  The name is a phonetic spelling of the
Japanese word for ``put''.

Another game applies tomography to rectangular tilings.

Tiling tomography is a rich and open area of complexity theory to
which a good introduction can be found in \cite{chrobak}.  The
relevant question is as follows: Given $r+c$ triples of numbers
$(h,v,m)$, one for each row and one for each column, is there a tatami
tiling which has $h$ horizontal dimers, $v$ vertical dimers, and $m$
monomers in the respective row or column?  

Without the tatami condition this decision problem is NP-hard (Theorem
4, \cite{DGM}).  Hard or not, the tatami condition gives considerable
information in practice, however, making the reconstruction of a
tatami tiling an entertaining challenge.  Erickson, A., has created an
online computer game out of this called Tomoku. It is playable at
\url{http://tomokupuzzle.com}, complete with music, countdown timers
and high scores.

\begin{figure}[ht]
  \centering 
\includegraphics[width = 0.7\textwidth]{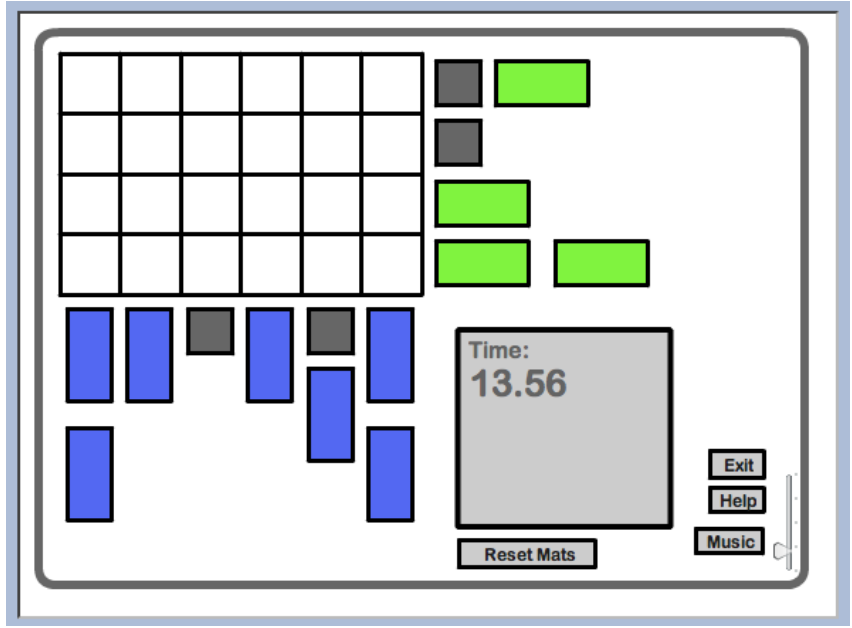}
\caption{The Tomoku web game.  The player is shown which tiles are
  completely contained in each row and column, and the object is to
  reconstruct the tiling.  Note that each monomer appears twice in the
  projections.}
\label{fig:tomoku}
\end{figure}

Both of these games are at
\url{http://www.theory.cs.uvic.ca/~cos/tatami/}, and it should be
noted that they can also be played with a pencil and paper.

\section{Acknowledgements}
Thanks to Donald Knuth for his comments on an earlier draft of this
paper and to Martin Matamala for pointing out the tomography problem.


\end{document}